\numberwithin{equation}{section}
\newtheorem{theorem}{Theorem}[section]
\newtheorem{lemma}[theorem]{Lemma}
\newtheorem{corollary}[theorem]{Corollary}
\theoremstyle{definition}
\newtheorem*{teoremNN}{Theorem}
\newcommand \NN{\mathbbm{N}}
\newcommand \Iso{{\rm Iso}}
\newcommand \EIso{{\rm EIso}}
\newcommand \dist{{\rm dist}}
\newcommand \diam{{\rm diam}}
\newcommand {\db}[1]{\llbracket #1 \rrbracket}
\newcommand \BB{\mathcal{B}}
\newcommand \EE{\mathcal{E}}
\newcommand \FF{\mathcal{F}}
\begin{document}

\title[Generic elements in isometry groups]{Generic elements in isometry groups of Polish ultrametric spaces}
\author{Maciej Malicki}

\address{Department of Mathematics and Mathematical Economics, Warsaw School of Economics, al. Niepodleglosci 162, 02--554 Warsaw, Poland}
\email{mamalicki@gmail.com}
\date{Dec 23, 2013}
\subjclass[2000]{(primary) 54H15, (secondary) 20E34, 20E22}

\begin{abstract}
This paper presents a study of generic elements in full isometry groups of Polish ultrametric spaces. We obtain a complete characterization of Polish ultrametric spaces $X$ whose isometry group $\Iso(X)$ has a neighborhood basis at the identity consisting of open subgroups with ample generics. It also gives a characterization of the existence of an open subgroup in $\Iso(X)$ with a comeager conjugacy class.

We also study the transfinite sequence defined by the projection of a Polish ultrametric space $X$ on the ultrametric space of orbits of $X$ under the action of $\Iso(X)$.
\end{abstract}

\maketitle
\section{Introduction}
In  this paper we study full isometry groups of Polish ultrametric spaces, equipped with the topology of pointwise convergence. We are mainly interested in the conjugation actions, and in the notion of ample generics.

A Polish (that is, separable and completely metrizable) topological group $G$ has ample generics if the diagonal action of $G$ on $G^{n+1}$ by conjugation has a comeager orbit for every $n \in \NN$. This notion was first studied by Hodges, Hodkinson, Lascar and Shelah in \cite{HoHo}, and later by Kechris and Rosendal in \cite{KeRo}. It is a very strong property: a group $G$ with ample generics, or even containing an open subgroup with ample generics, has the small index property, every homomorphism from $G$ into a separable group is continuous, every isometric action of $G$ on a separable metric space is continuous, and there is only one Polish group topology on $G$.

Surprisingly, many natural examples of Polish groups do have ample generics, and in recent years much effort has been put in detecting them. One of fundamental results in this area (\cite[Theorem 6.2]{KeRo}) provides a full characterization of Polish subgroups of the group $S_\infty$ of all permutations of the natural numbers which have ample generics. It is formulated in terms of Fra\"{i}ss\'e limits. Unfortunately, even if $G$ is a subgroup of $S_\infty$ (let alone it is not) this approach may not work because of a lack of a natural description of $G$ in terms of automorphisms of a countable structure, which is required in this framework. Nevertheless, both positive and negative results have been obtained despite this obstacle. For example, Kechris (unpublished but see \cite[Theorem 12.5]{GlWe}) proved that the isometry group $\Iso(\mathbbm{U})$ of the Polish Urysohn space $\mathbbm{U}$ does not have ample generics (in fact, every conjugacy class of $\Iso(\mathbbm{U})$ is meager), while Solecki \cite{So} showed that ${\rm Aut}(\mathbbm{U}_0)$, where $\mathbbm{U}_0$ is the rational Urysohn space regarded as a discrete relational structure, has ample generics.

Here, we are interested in an analogous investigation in the realm of Polish ultrametric spaces, that is, Polish metric spaces satisfying a strong version of the triangle inequality:
\[ d(x,z) \leq \max \{ d(x,y),d(y,z) \}.\]

We obtain a complete characterization of Polish ultrametric spaces $X$ whose isometry group $\Iso(X)$ has a neighbourhood basis at the identity consisting of open subgroups with ample generics (Theorem  \ref{th:main}), and we show that if $\Iso(X)$ does not such a basis, then it does not contain an open subgroup with a comeager conjugacy class (this, actually, also covers the case of countable ultrametric spaces regarded as discrete relational structures.) In particular, we show that isometry groups of all Polish ultrametric Urysohn spaces have ample generics (Corollary \ref{co:Ur}.) Proofs of these results are based on the fact that all Polish ultrametric spaces satisfy a weak version of the Hrushovski property (Lemma \ref{le:ext1}.)

As an application of our study, we provide an example of a Polish group with ample generics and countable cofinality (Corollary \ref{co:co}.) Recall that a group has countable cofinality if it is a union of a countably infinite, strictly increasing sequence of its subgroups. Theorem 6.12 in \cite{KeRo} says that a Polish group with ample generics is \emph{not} a union of any countable strictly increasing sequence of its \emph{non-open} subgroups, so ample generics almost preclude countable cofinality. It is natural to ask then whether these two notions are consistent with one another. We show that actually they are.

Another somewhat surprising upshot is a Polish ultrametric space which is extremely small in some sense but whose isometry group has ample generics (Corollary \ref{co:n-Ur}.) In this space $X$, all maximal polygons $P$ (that is, $P \subseteq X$ such that $d(x,y)=r$ for some fixed $r>0$ and every $x,y \in P$) have size $n=2$. For any other finite $n>1$, isometry groups of such spaces do not even contain an open subgroup with a comeager conjugacy class.

In \cite[Theorem 4.4]{AtGl}, the authors constructed a locally compact group with a dense conjugacy class. On the other hand, P.Wesolek \cite{We} has recently proved that locally compact Polish groups cannot have comeager conjugacy classes. We present an example of a locally compact group $G$ such that every diagonal action of $G$ on $G^{n+1}$, $n \in \NN$, by conjugation has a dense orbit (Corollary \ref{co:de}.)  Moreover, $G$ is given by a simple, explicit formula (the construction in \cite{AtGl} is inductive.)

In the last section we analyze the canonical projection of a Polish ultrametric space $X$ on the Polish ultrametric space of orbits of the natural action of $\Iso(X)$ on $X$, as described in \cite{MaSo}. Iterating this construction leads to a sequence of Polish ultrametric spaces, and their projections. We prove the existence of the limit step, which implies that every Polish ultrametric space can be canonically reduced to a rigid one (Theorem \ref{th:ri}.)

\section{Terminology and basic facts}

Let us start with a short review of basic facts and definitions exploited in the paper. A metric space $X$ is called \emph{Polish} if it is separable and complete. It is called \emph{ultrametric}, or \emph{non-archimedean}, if it satisfies a strong version of the triangle inequality:
\[ d(x,z) \leq \max \{ d(x,y),d(y,z) \}, \]
for all $x,y,z \in X$.

Typical examples of ultrametric spaces are
\begin{enumerate}[i)]
\item the Baire space, that is, the family $\NN^\NN$ of all sequences of the natural numbers with metric $d$ defined by
\[ d(x,y)=\max \{2^{-n}: x(n) \neq y(n) \}  {\rm \ for \ } x \neq y,\]
\item a valued field $K$ with valuation $| \cdot |:K \rightarrow \mathbbm{R}$ and metric
\[ d(x,y)=|x-y|,\]
in particular the field $\mathbbm{Q}_p$ of $p \, $-adic numbers is an ultrametric space,
\item the space of all rays in an $\mathbbm{R}$-tree $T$ starting from a fixed point $t \in T$ can be canonically given an ultrametric structure.
\end{enumerate}

By a ball of radius $r$ in an ultrametric space $X$ we will always mean a set of the form
\[ \{ x' \in X: d(x,x')<r\} \] 
for some $x \in X$. Even if we do not specify the radius, we assume that it has one. Note that two balls may be equal as sets even though they have different radii.
 
An easy to prove but fundamental property of every ultrametric space $X$ is that if $B, B'$ are balls in $X$, then
\[ B \subseteq B' {\rm \ or \ } B \subseteq B' {\rm \ or \ } B \cap B'=\emptyset.\]
In particular, every triple $x,y,z \in X$ realizes only two distances:

\begin{equation}
\label{eq0}
d(y,x)=d(x,z) {\rm \ or \ } d(x,y)=d(y,z) {\rm \ or \ } d(x,z)=d(z,y). 
\end{equation}
This observation will be repetitively used. It implies that  every element of a ball in $X$ is its center, and that the \emph{set of distances}
\[ \{ d(x,y): x,y \in X, \, x\neq y \}\]
of a separable ultrametric space $X$ is always countable.

%

We say that a metric space $X$ is \emph{ultrahomogeneous} if every isometric bijection between finite subsets of  $X$ can be extended to an isometry of  $X$. It is well known that every Polish ultrametric space $X$ with the set of distances contained in a countable $R \subseteq \mathbbm{R}^{>0}$ can be isometrically embedded in an ultrahomogeneous Polish ultrametric space $X_R$ defined as follows:
\[ X_R=\{ x \in \NN^R: \forall r>0 \, (\{ r' \in R: r'>r {\rm \ and \ } x(r') \neq 0 \} { \rm \ is \ finite} ) \}, \]
\[ d(x,y)=\max \{ r \in R: x(r) \neq y(r) \} {\rm \ for \ } x,y \in X_R, \ x \neq y.\]
The space $X_R$ is called a Polish ultrametric Urysohn space.

One can also consider Polish ultrametric $n$-Urysohn spaces $X^n_R \subseteq X_R$, where $n$ is a natural number, consisting of sequences in $\{0, \ldots, n-1\}^R$ (instead of $\NN^R$.) Clearly, all maximal $r$-\emph{polygons} in $X_R$, that is, sets $P \subseteq X_R$ such that $d(x,y)=r$ for every $x,y \in P$, are infinite. Every Polish ultrametric $n$-Urysohn space $X^n_R$ is ultrahomogeneous, and universal for Polish ultrametric spaces $X$ with the set of distances contained in $R$ and such that every maximal polygon in $X$ has size at most $n$.

For a Polish metric space $X$ the group $\Iso(X)$ of all isometries of $X$, that is, all distance preserving bijections $\phi:X \rightarrow X$, is a Polish topological group when equipped with the topology of pointwise convergence. This is the only topology on $\Iso(X)$ we consider in this paper. In the case that $X$ is ultrametric, sets of the form
\[ \tau(g,\mathcal{B})=\{ g' \in \Iso(X): g'[B]=g[B] \mbox{ for each } B \in \mathcal{B} \}, \]
where $g \in \Iso(X)$, $\mathcal{B}$ is a finite family of balls in $X$, form a basis of this topology. Actually, it suffices to consider finite families of pairwise disjoint balls, that is, \emph{FD families}. If $\BB$, $\BB'$ are FD families such that $\tau(Id, \BB') \subseteq \tau(Id, \BB)$, we say that $\BB'$ \emph{strengthens} $\BB$.

Let $A,B \subseteq X$, and let $f:A \rightarrow B$ be an isometric mapping. We say that $f$ is a \emph{partial isometry} of $X$. If $f$ can be extended to an isometry of $X$, we say that $f$ is $X$-\emph{extendable}. By $\EIso(A)$ we denote the group of all $X$-extendable partial isometries $f:A \rightarrow A$. Finally, we say that two sets $Y, Y' \subseteq X$ are \emph{similar} if there exists $g \in \Iso(X)$ such that $g[Y] = Y'$. 

If every extension of an $X$-extendable partial isometry $f$ to an isometry of $X$ induces a fixed permutation of an $FD$ family $\mathcal{B}$, we say that $f$ induces a permutation of $\BB$.

If $C$ is a ball in $X$, and $\BB$ is an FD family, we define
\[ \mathcal{O}(C)=\{ g[C]:g \in \Iso(X) \}, \, \mathcal{O}^\BB(C)=\{ g[C]:g \in \tau(Id,\BB) \}. \]
By (\ref{eq0}), both of these families consist of pairwise disjoint balls.

Let $B$ a ball of radius $r$, and let $x  \in X$. The collection of all balls $B'$ of radius $r$ which are similar to $B$, and such that $\dist(B,B') = r$, is denoted by $[B]$. The orbit of $x$ under the action of $\Iso(X)$ on $X$ is denoted by $\db{x}$.

Observe that for every  ball $B$ the group of those elements of $\Iso(X)$ that permute $[B]$, acts on $[B]$ as a full symmetric group.

Let $\BB$ be an FD family in $X$. If $\mathcal{O}^\BB(C)$ is infinite or a singleton for every ball $C$ in $X$, we say that $X$ has the $\BB$-\emph{extremality property}. 
Also, if  a ball $C$ in $X$ is such that $\mathcal{O}^\BB(C)$ is infinite or a singleton, then we say that $C$ is $\BB$-\emph{extremal in} $X$.  
Otherwise, $C$ is $\BB$-\emph{non-extremal in} $X$. If $C$ is a ball of radius $r$, which is $\BB$-non-extremal in $X$, and $C'$ is $\BB$-extremal in $X$ for every ball $C'$ of radius $r'>r$ containing $C$, then we will say that $C$ is \emph{radius-maximal} $\BB$-\emph{non-extremal in} $X$ or, shortly, \emph{r-maximal} $\BB$-\emph{non-extremal in} $X$.

If $\BB$ is empty, we simply say that $X$ has the extremality property, and $C$ is extremal in $X$.

%
%
%


Finally, a Polish group $G$ has \emph{ample generics} if each \emph{diagonal conjugation action} of $G$, that is, an action of $G$ on $G^{n+1}$, for some $n \in \NN$, defined by
\[ g.(g_0,\ldots, g_n)=(gg_0g^{-1}, \ldots, gg_ng^{-1}), \]
$g, g_0, \ldots, g_n \in G$, has a comeager orbit.
Main properties of groups with ample generics are summed up below (Theorems 6.9, 6.12, 6.24 and 6.25 in \cite{KeRo}):
\begin{teoremNN}
Let $G$ be a Polish group containing an open subgroup $H$ with ample generics. Then
\begin{enumerate}[1)]
\item every homomorphism from $G$ into a separable topological group is continuous,
\item there is only one Polish group topology on $G$,
\item $G$ has the small index property,
\item $H$ is not the union of countably many non-open subgroups.
\end{enumerate}
\end{teoremNN}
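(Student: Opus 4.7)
Since the statement packages four results attributed to Kechris and Rosendal (Theorems 6.9, 6.12, 6.24, 6.25 of [KeRo]), the plan is to derive them all from one common engine: a Steinhaus-type lemma forced by ample generics on $H$. Concretely, one would show that if $W \subseteq H$ is symmetric with $1 \in W$ and $H = \bigcup_n g_n W$ for some sequence $(g_n) \subseteq H$, then a suitable finite power $W^k$ contains an open neighborhood of the identity in $H$. Once this lemma is in place, items (1)--(3) follow by routine manipulations, while item (4) requires an additional argument that uses ample generics at \emph{every} finite level.

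The Steinhaus-type lemma is the key technical input. The plan is to use the comeager orbit of the diagonal conjugation action of $H$ on $H^n$ for suitably large $n$. Given $W$ as above, Baire category forces some translate $g_n W$ to be non-meager in $H$, and, applied coordinatewise, some product of translates to be non-meager in $H^n$. One then picks a tuple $\bar h$ in the comeager diagonal orbit lying inside such a translate, observes that any two such tuples are conjugate by an element of a prescribed basic open set, and combines finitely many conjugators to produce an element of a controlled power of $W$ inside any prescribed basic open neighborhood of the identity. This is the Hodges--Hodkinson--Lascar--Shelah style automatic continuity argument adapted by Kechris and Rosendal.

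For item (1), given a homomorphism $\phi \colon G \to L$ into a separable topological group and an open symmetric $V \ni e_L$, the separability of $L$ lets one cover $L$ by countably many translates of $V$; pulling back and applying the lemma to $W = \phi^{-1}(V) \cap H$ shows $\phi$ is continuous at the identity of $H$, and hence on all of $G$ since $H$ is open. Item (3) (small index property) follows similarly: for any subgroup $K \leq G$ of index strictly less than the cofinality of the meager ideal, apply the lemma to $W = K \cap H$ to conclude that $K \cap H$ is open in $H$, hence $K$ is open in $G$. Item (2) is then immediate: if $\tau, \tau'$ are two Polish group topologies on $G$, the small index property forces $H$ to be open in both, and the identity map between them is a Borel homomorphism, hence continuous by (1); the symmetric argument gives $\tau = \tau'$.

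The main obstacle is item (4), the absence of countable cofinality by non-open subgroups. Unlike the first three, it does not reduce to a single application of the Steinhaus lemma. The plan is: given $H = \bigcup_n H_n$, pick a comeager diagonal conjugacy class $C_{k}\subseteq H^{k+1}$ for each $k$, show that for some $k$ and $n$ the set $C_k$ must meet $H_n^{k+1}$ in a non-meager set, and then exploit the combinatorial structure of the comeager class (its closure under conjugation and under taking subtuples) to conclude that $H_n$ contains a basic open neighborhood of the identity. This simultaneous use of ample generics at all levels $k$, rather than at a single one, is the heart of Theorem 6.12 of [KeRo] and is the most delicate of the four arguments.
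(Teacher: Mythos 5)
The paper does not actually prove this statement: it is quoted as background, with the proof delegated entirely to Theorems 6.9, 6.12, 6.24 and 6.25 of [KeRo]. There is therefore no in-paper argument to compare yours against, and I can only measure your sketch against the actual Kechris--Rosendal proofs. Your overall strategy --- a Steinhaus-type lemma for symmetric, countably syndetic subsets of $H$ extracted from the comeager diagonal conjugacy classes, followed by the standard deductions --- is indeed the right engine for item (1), and your description of item (4) as requiring ample generics at every finite level simultaneously is also on target.

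Two of your reductions have genuine gaps. For item (3), the small index property concerns subgroups $K$ of index strictly less than $2^{\aleph_0}$; applying the Steinhaus lemma to $W = K \cap H$ requires $K \cap H$ to be \emph{countably} syndetic in $H$, which only follows when the index is countable. For uncountable index below the continuum, $K$ is not covered by countably many of its own translates, and non-meagerness of $K$ alone does not let you invoke Pettis, since an arbitrary subgroup need not have the Baire property. Kechris and Rosendal treat this case by the Hodges--Hodkinson--Lascar--Shelah construction of a perfect set of distinct cosets of a non-open subgroup, which is a separate argument from the Steinhaus lemma. For item (2), ``the symmetric argument'' is unavailable: automatic continuity gives continuity of the identity $(G,\tau) \to (G,\tau')$, but $(G,\tau')$ is not known to have ample generics or the small index property, so neither (1) nor (3) applies in the reverse direction; the standard repair is the open mapping theorem for Polish groups (a continuous bijective homomorphism of Polish groups is a homeomorphism). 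Finally, inside your proof of the Steinhaus lemma, the claim that any two tuples in the comeager orbit ``are conjugate by an element of a prescribed basic open set'' is false as stated --- the set of conjugators is a single coset of the stabilizer and can miss a given open set; what the argument actually uses is that $V.\bar g$ is non-meager for every open $V \ni 1$ and every $\bar g$ in the comeager orbit, and the remaining combinatorics are correspondingly more delicate.
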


\section{Ample generics} 

The following lemma is well known. We prove it for the sake of completeness.

\begin{lemma}
\label{le:fo}
Let $X$ be a Polish space, and let $G$ be a Polish group continuously acting on $X$.
\begin{enumerate}
\item Suppose that $x \in X$ is such that for every open neighborhood of the identity $V \subseteq G$, the set $V.x$ is somewhere dense in $G.x$. Then the orbit of  $x$ is comeager in $\overline{G.x}$,
\item suppose that there exists a non-empty open $U \subseteq X$ such that for every $x \in U$ there exists an open neighborhood of the identity $V \subseteq G$ such that $V.x$ is meager in $X$. Then there is no comeager orbit in $X$.
\end{enumerate}
\end{lemma}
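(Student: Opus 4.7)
\emph{Plan for part (1).} This is an Effros-type result, provable by a Baire category argument together with standard Polish group action machinery. The first step is to observe that the somewhere-dense hypothesis propagates along the orbit: if $y=g.x$ and $V\ni e$ is an open neighborhood of the identity, then $V.y=g.((g^{-1}Vg).x)$, and since $g^{-1}Vg$ is again an open neighborhood of the identity and $g$ is a homeomorphism of $X$ (preserving closures and interiors within $G.x$), the hypothesis at $x$ yields that $V.y$ is somewhere dense in $G.y=G.x$. Thus the hypothesis holds at every point of $G.x$.

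Next, fix a decreasing basis $(V_n)$ of symmetric open neighborhoods of $e$ with $V_{n+1}^2\subseteq V_n$, and a compatible complete metric $d_G$ on $G$. For each $n$, choose an open $U_n\subseteq\overline{G.x}$ witnessing that $V_n.x$ is somewhere dense: $\emptyset\ne U_n\cap G.x\subseteq\overline{V_n.x}$. Since $G.x$ is dense in $\overline{G.x}$, $G.x\cap U_n$ is dense in $U_n$, so in fact $U_n\subseteq\overline{V_n.x}$. The set $W_n=\bigcup_{g\in G}g.U_n$ is open, and its density in $\overline{G.x}$ is immediate: any nonempty open $O\subseteq\overline{G.x}$ meets $G.x$ at some $y_0=g_0.x$, and picking $z=k.x\in U_n$ gives $y_0=g_0k^{-1}.z\in(g_0k^{-1}).U_n\subseteq W_n$. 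Hence $D:=\bigcap_n W_n$ is a dense $G_\delta$ in $\overline{G.x}$. For $y\in D$ and each $n$, pick $g_n$ with $g_n^{-1}.y\in U_n\subseteq\overline{V_n.x}$, and then $v_n\in V_n$ with $g_nv_n.x$ arbitrarily close to $y$ (using continuity of $g_n$ at $g_n^{-1}.y$). By a standard Choquet-style back-and-forth (coordinating the choices of $g_n$ within shrinking $d_G$-balls so that $h_n:=g_nv_n$ forms a Cauchy sequence in $G$), completeness of $G$ gives $h_n\to h$ in $G$, and continuity of the action gives $h.x=\lim h_n.x=y$, so $y\in G.x$. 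Therefore $D\subseteq G.x$ is comeager in $\overline{G.x}$.

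\emph{Plan for part (2).} Argue by contradiction: suppose some orbit $G.y_0\subseteq X$ is comeager, hence dense. Then $G.y_0$ meets the nonempty open $U$ at some $y_1=h.y_0$. By the hypothesis applied at $y_1\in U$, there exists an open neighborhood $V$ of $e$ with $V.y_1$ meager in $X$. Fix a countable dense subset $\{g_i\}_{i\in\NN}\subseteq G$; since $Vh$ is open and nonempty, the translates $\{g_iVh:i\in\NN\}$ cover $G$ (for any $g\in G$, density gives $g_i\in gh^{-1}V^{-1}$ for some $i$, i.e., $g\in g_iVh$). Consequently
\[
G.y_0 \;=\; \bigcup_i (g_iVh).y_0 \;=\; \bigcup_i g_i.(V.y_1),
\]
a countable union of homeomorphic images of the meager set $V.y_1$, each of which is meager in $X$. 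Therefore $G.y_0$ is meager, contradicting comeagerness, and so no orbit in $X$ can be comeager.

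\emph{Main obstacle.} The delicate point is in part (1): the inductive construction of $(h_n)\subseteq G$ that is simultaneously Cauchy in $G$ and has orbit values converging to the given $y\in D$. It requires careful coordination between the neighborhoods $V_n$ in $G$, the witnesses $U_n$ in $\overline{G.x}$, and the moduli of continuity of the successive homeomorphisms $g_n$ at the relevant points, and it crucially exploits both the completeness of $G$ and the orbit-invariance of the hypothesis established at the outset.
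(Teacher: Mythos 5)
Your Part (2) is correct and is essentially the paper's own argument: every comeager orbit meets $U$, and for $x\in U$ with $V.x$ meager one covers $G$ by countably many translates of $V$ (or of $Vh$) and writes the orbit as a countable union of meager sets.

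Part (1) is where the problem lies. The paper proves it by contraposition using only the Baire property of the analytic set $G.x$: if the orbit is not comeager in $\overline{G.x}$, there is an open $U$ with $G.x\cap U$ non-empty and meager in $\overline{G.x}$, hence covered by closed nowhere dense sets $F_n$; Baire category in the open set $W=\{g:g.x\in U\}$ produces a non-meager $W'$ with $\overline{W'}.x\subseteq F_{n_0}$, and since $\overline{W'}$ has interior one extracts a neighborhood $V$ of the identity with $V.x\subseteq g^{-1}F_{n_0}$ nowhere dense, contradicting the hypothesis. No completeness of $G$ and no Cauchy construction is needed. Your direct construction has a genuine gap exactly at the step you flag as the main obstacle. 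Having fixed the witnesses $U_n$ and the open dense sets $W_n=\bigcup_{g}g.U_n$ in advance, a point $y\in\bigcap_n W_n$ only yields, for each $n$ separately, some $g_n$ with $y\in\overline{g_nV_n.x}$. The admissible $g_n$ for a given $y$ form the set $\{g:g^{-1}.y\in U_n\}$, and nothing links these sets across different $n$; in particular you are not free to ``coordinate the choices of $g_n$ within shrinking $d_G$-balls,'' since $g_{n+1}$ must still satisfy the constraint $g_{n+1}^{-1}.y\in U_{n+1}$ and need not lie anywhere near $g_n$. The Effros-type recursion you invoke requires choosing the open witnesses adaptively --- at stage $n+1$ one applies the somewhere-density hypothesis to the translated point $h_n^{-1}.y$ and to a neighborhood depending on the previous stages, a strong-Choquet-game argument --- which is a materially different construction from the one you set up. As written, the sequence $h_n=g_nv_n$ need not be Cauchy, so the inclusion $D\subseteq G.x$ is not established.
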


\begin{proof}
We prove Point (1). Suppose that $G.x$ is not comeager in $\overline{G.x}$. To derive a contradiction, we will find an open neighborhood of the identity $V \subseteq G$ such that $V.x$ is nowhere dense in $\overline{G.x}$. Since $G.x$ is analytic, and so it has the Baire property in $\overline{G.x}$, there exists an open set $U \subseteq X$ such that $G.x \cap U$ is non-empty and meager in $\overline{G.x}$. In other words, $U \cap G.x \subseteq \bigcup_n F_n$, where each $F_n$ is closed and nowhere dense in $\overline{G.x}$. The set
\[ W=\{ g \in G: g.x \in U \} \]
is open and non-empty in $G$, so there exists a non-meager $W' \subseteq W$ such that $W'.x \subseteq F_{n_0}$ for some $n_0$. By continuity of the action, $\overline{W'}.x \subseteq F_{n_0}$ as well. Since $\overline{W'}$ has non-empty interior, there exists $g \in G$ and an open neighborhood of the identity $V$ such that $gV.x \subseteq F_{n_0}$. Therefore $V.x \subseteq g^{-1}F_{n_0}$, and $V.x$ is nowhere dense in $\overline{G.x}$.

Concerning Point (2), every comeager orbit must intersect $U$, so it suffices to prove that no orbit of an element of  $U$ is comeager. Fix $x \in U$, and $V \subseteq G$ such that $V.x$ is meager. Clearly, $gV.x$ is meager for every $g \in G$ . Find $g_0, g_1, \ldots \in G$ such that $\bigcup_n g_nV=G$. Then $G.x=\bigcup_n g_n V.x$, so $G.x$ is meager.

\end{proof}

\section{Extending partial isometries in ultrametric spaces} 

The next lemma is a simplified version of \cite[Lemma 4.4]{MaSo}.

\begin{lemma}
\label{le:ext0}
Suppose that $X$ is a Polish ultrametric space, $Y,Y' \subseteq X$ are finite, and $f:Y \rightarrow Y'$ is an isometric bijection such that $\db{y}=\db{f(y)}$ for every $y \in Y$. Then $f$ is $X$-extendable.
\end{lemma}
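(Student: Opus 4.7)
My plan is to induct on $n = |Y|$. The base case $n \leq 1$ is immediate from the orbit hypothesis: when $Y = \{y\}$, any $g \in \Iso(X)$ witnessing $\db{y} = \db{f(y)}$ extends $f$.

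For the inductive step, I would fix an arbitrary $y_0 \in Y$, set $Y_0 = Y \setminus \{y_0\}$, $y'_0 = f(y_0)$, $Y'_0 = Y' \setminus \{y'_0\}$, and apply the inductive hypothesis to $f|_{Y_0}$ (which trivially inherits the orbit condition) to obtain $g \in \Iso(X)$ extending $f|_{Y_0}$. With $z := g(y_0)$, two facts follow at once: $\db{z} = \db{y_0} = \db{y'_0}$, and for every $p = f(y) \in Y'_0$ one has $d(z, p) = d(g(y_0), g(y)) = d(y_0, y) = d(y'_0, p)$, so $z$ and $y'_0$ lie in the same orbit and share the same distance profile to $Y'_0$. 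The task then reduces to producing an $h \in \Iso(X)$ that fixes $Y'_0$ pointwise and sends $z$ to $y'_0$, since $h \circ g$ will then extend $f$.

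To build $h$, I would exploit the ultrametric ball structure around $z$ and $y'_0$. Assuming $z \neq y'_0$, let $d^* := \min\{d(z, p) : p \in Y'_0\}$; applying (\ref{eq0}) to $z$, $y'_0$ and any $p \in Y'_0$ realizing the minimum forces $d(z, y'_0) \leq d^*$. Choose any $\phi \in \Iso(X)$ with $\phi(z) = y'_0$; then $\phi$ carries the open ball $B_z$ of radius $d^*$ about $z$ onto the open ball $B_{y'_0}$ of radius $d^*$ about $y'_0$, and both balls avoid $Y'_0$ by the definition of $d^*$ together with the distance-profile identity. If $d(z, y'_0) < d^*$ the two balls coincide in a common ball $B$, and I would set $h := \phi$ on $B$ and $h := \Id$ elsewhere. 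If $d(z, y'_0) = d^*$ the balls are disjoint, and I would set $h := \phi$ on $B_z$, $h := \phi^{-1}$ on $B_{y'_0}$, and $h := \Id$ elsewhere.

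The only non-bookkeeping step is checking that this piecewise $h$ is a global isometry, and this is the main point where care is needed. The essential computation, again via (\ref{eq0}), is that whenever $x \in B_z$ and $y \notin B_z \cup B_{y'_0}$ one has $d(x, y) = d(z, y) = d(y'_0, y)$: the first equality because $d(x, z) < d^* \leq d(z, y)$, the second because $d(z, y'_0) \leq d^* \leq d(z, y), d(y'_0, y)$; a symmetric identity handles $x \in B_{y'_0}$. These show that swapping the two balls (or acting on a single ball and leaving its complement untouched) preserves every distance in $X$. Since by construction $h$ fixes $Y'_0$ pointwise and sends $z$ to $y'_0$, the composition $h \circ g$ is an element of $\Iso(X)$ extending $f$, completing the induction.
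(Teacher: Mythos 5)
Your argument is correct, but it takes a genuinely different route from the paper's. The paper extends $f$ one new point of $X$ at a time: for $x\notin Y$ with $d(x,y)=\dist(x,Y)=r$, the orbit hypothesis produces $x'\in\db{x}$ with $d(x',f(y))=\dist(x',Y')=r$, and (\ref{eq0}) shows that $f\cup\{(x,x')\}$ is again an isometric map satisfying the orbit condition; iterating this over a countable dense subset of $X$ (with a back-and-forth to get surjectivity) and then using completeness to pass to the closure yields the global isometry. You instead induct downward on $|Y|$: the inductive hypothesis gives a global isometry $g$ extending $f\upharpoonright Y_0$, and you repair it at the single point $y_0$ by composing with an explicit ``ball swap'' $h$ that fixes $Y'_0$ pointwise and moves $z=g(y_0)$ to $y'_0$. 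Your distance computations via (\ref{eq0}) are the right ones, and the one small point left implicit --- that $\phi$ carries $B_z$ bijectively onto $B_{y'_0}$ and $\phi^{-1}$ carries $B_{y'_0}$ back onto $B_z$, so that the piecewise definition really is a permutation of $X$ --- is immediate since isometries map open balls of radius $d^*$ onto open balls of radius $d^*$. What your approach buys: every map you construct is already an element of $\Iso(X)$, and neither separability nor completeness is ever invoked, so your proof in fact works for arbitrary ultrametric spaces. What the paper's approach buys: the one-point-extension technique is exactly the tool it reuses elsewhere (e.g.\ in Lemma \ref{le:Count}), so presenting the lemma that way keeps the machinery uniform.
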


\begin{proof}
Fix  $x \in  X \setminus Y$, and $y \in Y$ such that
\[ d(x,y)=\dist(x,Y)=r.\]
Because $\db{y}=\db{f(y)}$ for every $y \in Y$, $x$ witnesses that there exists $x' \in \db{x} \setminus Y'$ such that 
\[ d(x',f(y))=\dist(x',Y')=r.\]
We leave to the reader checking that $f \cup (x,x')$ is isometric (here, (\ref{eq0}) is crucial.)

Proceeding as above, we can extend $f$ to an isometry $g$ of a countable, dense subset of $X$. Then the unique extension of $g$ to an isometry of $X$ witnesses that $f$ is $X$-extendable.
\end{proof}

In \cite{So}, it is proved that for every finite metric space $Y$ there exists a finite metric space $Z$ such that $Y \subseteq Z$, and every partial isometry of $Y$ can be extended to an isometry of $Z$.  It follows that for every finite subset $Y$ of the Urysohn space $\mathbbm{U}$ there exists a finite $Z \subseteq \mathbbm{U}$ such that $Y \subseteq Z$, and every partial isometry of $Y$ can be extended to an isometry of $\mathbbm{U}$ that setwise fixes $Z$. Thus, $\mathbbm{U}$ has the \emph{Hrushovski property}. 
The Hrushovski property does not hold for arbitrary metric spaces, it turns out though that every ultrametric space shares a weak version of it.'

\begin{lemma}
\label{le:ext1}
Let $X$ be an ultrametric space. For every finite $Y \subseteq X$ there exists a finite $Z \subseteq X$ such that $Y \subseteq Z$, and every $X$-extendable partial isometry of $Y$ can be extended to an $X$-extendable isometry of $Z$. Moreover, if $\left| Y \right|>1$, then $Z$ can be chosen so that for every $z \in Z$ there is $y \in Y$ such that $d(z,y)<\diam(Y)$.
\end{lemma}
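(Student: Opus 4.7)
The plan is to argue by induction on $|Y|$. For $|Y|\leq 1$ take $Z=Y$. For $|Y|\geq 2$, let $r=\diam(Y)$; the ultrametric inequality makes $d(y,y')<r$ an equivalence relation on $Y$, partitioning it into classes $Y_1,\ldots,Y_k$ with $k\geq 2$ and each $|Y_i|<|Y|$; each $Y_i$ sits in an open ball $B_i$ of radius $r$, and the $B_i$ are pairwise disjoint. Partition $\{1,\ldots,k\}$ into classes $C_1,\ldots,C_p$ by ball-similarity in $X$ (i.e.\ $i\equiv j$ iff some element of $\Iso(X)$ sends $B_i$ to $B_j$). For each class $C_\alpha$ fix a representative $i(\alpha)$, set $B^\alpha=B_{i(\alpha)}$, choose $\psi_j\in\Iso(X)$ with $\psi_j(B_j)=B^\alpha$ for $j\in C_\alpha$, and form $\tilde Y^\alpha=\bigcup_{j\in C_\alpha}\psi_j(Y_j)\subseteq B^\alpha$, a finite set of diameter $<r$. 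Apply the inductive hypothesis to each $\tilde Y^\alpha$ to get $Z^\alpha\supseteq\tilde Y^\alpha$ inside $B^\alpha$ such that every $X$-extendable partial isometry of $\tilde Y^\alpha$ extends to an $X$-extendable isometry of $Z^\alpha$. Transport back by $Z_j=\psi_j^{-1}(Z^\alpha)$ for $j\in C_\alpha$, and set $Z=\bigcup_j Z_j$. Then $Y\subseteq Z$, and each $z\in Z$ lies in some $B_j$, hence at distance $<r=\diam(Y)$ from any element of $Y_j\subseteq Y$, giving the ``moreover'' clause.

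To verify the extension property, take an $X$-extendable partial isometry $f\colon A\to B$ of $Y$ with extension $\phi\in\Iso(X)$. Since $\phi$ maps balls to balls, for each $i$ with $A\cap Y_i\neq\emptyset$ one has $\phi(B_i)=B_{\sigma(i)}$ for a unique $\sigma(i)$, and the resulting partial injection $\sigma$ of $\{1,\ldots,k\}$ respects the similarity classes $C_\alpha$. Extend $\sigma$ to a permutation $\tau$ with $\tau(C_\alpha)=C_\alpha$ for each $\alpha$. For each $i$, conjugate the corresponding piece of $f$ into $\tilde Y^\alpha$ via $\psi_i$ and $\psi_{\tau(i)}$, invoke the Hrushovski property of $Z^\alpha$ to extend it to an $X$-extendable $g^*_i\colon Z^\alpha\to Z^\alpha$, and pull back to $g_i\colon Z_i\to Z_{\tau(i)}$; for $i\notin\mathrm{dom}(\sigma)$ take $g_i=\psi_{\tau(i)}^{-1}\circ\psi_i$ directly. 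The $g_i$ assemble into $g\colon Z\to Z$, an isometric bijection (all cross-ball distances equal $r$) extending $f$, and $X$-extendable by Lemma~\ref{le:ext0} applied inside each ball, since orbit preservation is a local condition.

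The main obstacle will be the well-foundedness of the induction, because one has only $|\tilde Y^\alpha|\leq|Y|$, and equality can occur when $\{1,\ldots,k\}$ is a single ball-similarity class and the marked pairs $(B_j,Y_j)$ represent distinct joint similarity classes. I plan to address this by choosing the $\psi_j$ coherently, so that $\psi_j(Y_j)=\psi_{j'}(Y_{j'})$ whenever there is $\phi\in\Iso(X)$ with $\phi(B_j)=B_{j'}$ and $\phi(Y_j)=Y_{j'}$, which forces $|\tilde Y^\alpha|$ to equal the total size of one representative per joint-similarity class, and by supplementing the induction on $|Y|$ with the strict inequality $\diam(\tilde Y^\alpha)<\diam(Y)$ through a lexicographic induction on a suitable depth invariant of the dendrogram of $Y$, so that the recursive application to each $\tilde Y^\alpha$ terminates.
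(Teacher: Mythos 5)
Your overall architecture matches the paper's: induct on $|Y|$, split $Y$ at the top scale $r=\diam(Y)$ into pieces living in pairwise disjoint balls at mutual distance $r$, recurse on the pieces, and reassemble an extension of a given partial isometry by completing the induced partial injection of ball indices to a permutation. The reassembly step and the ``moreover'' clause are fine. The problem is exactly the one you flag yourself: the induction is not well-founded, and neither of your proposed repairs closes the gap. The coherent choice of the $\psi_j$ only shrinks $\tilde Y^\alpha$ when two marked pairs $(B_j,Y_j)$, $(B_{j'},Y_{j'})$ are \emph{jointly} similar; if all the $B_j$ lie in one similarity class but no two marked pairs are jointly similar (e.g.\ in $X_{\mathbbm{Q}}$, take $Y_1$ a pair at distance $1$ and $Y_2$ a pair at distance $1/2$ inside two balls at distance $2$), then $|\tilde Y^\alpha|=|Y|$. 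Falling back on $\diam(\tilde Y^\alpha)<\diam(Y)$ does not help, since the diameter is real-valued and can decrease strictly forever; and no integer ``dendrogram depth'' is shown to decrease --- indeed it need not, because the transported copies $\psi_j(Y_j)$ sit inside $B^\alpha$ at whatever new intermediate distances the space offers, so $\tilde Y^\alpha$ can reproduce a configuration with the same number of points and the same combinatorial shape as $Y$ at a smaller scale, and the recursion loops indefinitely.

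The paper resolves precisely this case with two ingredients you are missing. First, it groups the top-level balls not by ball similarity but by the transitive closure $\sim$ of the relation ``some $x\in Y_E$ and $x'\in Y_{E'}$ lie in the same $\Iso(X)$-orbit''. If $\sim$ has more than one class, then no $X$-extendable partial isometry of $Y$ moves a point across classes, so one recurses on each class separately (each strictly smaller than $Y$) and there is no need to make the resulting $Z$-pieces of different classes similar at all --- your insistence on arranging similarity across all similar balls is what forces $\tilde Y^\alpha$ to be large. Second, when $\sim$ has a single class, the chain of witnessing isometries produces an overlap: choosing $f\in\Iso(X)$ with $f(x_0)\in Y_1$ for some $x_0\in Y_0$, the sets $W_0=f^{-1}[Y_1]\cup Y_0$ and $W_1=f[Y_0]\cup Y_1$ are similar, contain $Y_0$, $Y_1$ respectively, and satisfy $|W_i|<|Y_0\cup Y_1|$ because $f(x_0)$ is counted once; propagating this along the class keeps every auxiliary set of size strictly less than $|Y|$, so the induction runs on cardinality alone. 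Without this overlap trick (or some substitute for it), your argument does not terminate.
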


\begin{proof}
We prove the lemma by induction on the size of $Y$. Fix a finite $Y \subseteq X$.  If $Y$ is a singleton, then $Z=Y$ is as required. Suppose that $\left| Y \right|>1$, and that the lemma holds for sets of size $< \left| Y \right|$. Let $r=\diam(Y)$, and let $\EE$ be the family of all  balls of radius $r$, which have non-empty intersection with $Y$. Then $\left| \EE \right|>1$, and, because of (\ref{eq0}), $\dist(E,E')=r$ for every $E,E' \in \EE$ such that $E \neq E'$.

Put $Y_E= Y \cap E$ for $E \in \EE$. Define the relation $\sim_0$ on elements of $\EE$ as follows: $E \sim_0 E'$ if there exist $x \in Y_E$, $x' \in Y_{E'}$ such that $\db{x}=\db{x'}$. Let $\sim$ be the transitive closure of $\sim_0$. Then $\sim$ is an equivalence relation such that $E \sim E'$ implies that $E,E'$ are similar, while $E \not \sim E'$ implies that there is no $X$-extendable mapping between $Y_E$ and $Y_{E'}$.

Suppose that the relation $\sim$ has more than one equivalence class, and let $\mathcal{C}$ be the family of all equivalence classes of $\sim$. For $\FF \in \mathcal{C}$ put $F_{\FF}=\bigcup \FF$, $Y_{\FF}=Y \cap F_{\FF}$. Then $\left| Y_{\FF} \right|< \left| Y \right|$ for every $\FF \in \mathcal{C}$, so, by the inductive assumption, we can fix $Z_{\FF}$ as in the statement of the lemma, assuming additionally that $Z_\FF=Y_\FF$ if $Y_\FF$ is a singleton. Observe that the facts that each $F_{\FF}$ is a union of balls of radius $r$, and that for every $z \in Z_\mathcal{F}$ there is $y \in Y_\mathcal{F}$ such that $d(z,y)<\diam(Y_\mathcal{F})$, imply that $Z_{\FF} \subseteq F_{\FF}$. Therefore $\dist(Z_{\FF},Z_{\FF'})=r$ for any two distinct $\FF$, $\FF' \in \mathcal{C}$.  In this situation (\ref{eq0}) easily implies that for any collection of $g_\FF \in \EIso(Z_\FF)$, $\FF \in \mathcal{C}$, the mapping $\bigcup_{\FF \in \mathcal{C}} g_\FF$ is isometric, and thus it is also $X$-extendable. 

Fix an $X$-extendable partial isometry $f$ of $Y$. By the definition of the relation $\sim$, $f[Y_\FF] \subseteq Y_\FF$ for every $\FF \in \mathcal{C}$, that is, each $f \upharpoonright Y_\FF$ is an $X$-extendable partial isometry of $Y_\FF$. Therefore we can extend each $f \upharpoonright Y_\FF$ to some $g_\FF \in \EIso(Z_\FF)$. By the above remarks, $\bigcup_{\FF \in \mathcal{C}} g_\FF$ is $X$-extendable, and it it extends $f$.
 
Suppose now that $\EE$ is the only equivalence class of $\sim$, and let $E_0, \ldots, E_n$ be an enumeration of $\EE$ such that $E_0 \sim_0 E_1$. Since $\left| Y \right|>1$, the family $\EE$ is not a singleton, that is, $n>0$. Denote $Y_i=Y_{E_i}$. We will find finite sets $W_i \subseteq E_i$ such that for all  $i,i' \leq n$
\begin{enumerate}
\item $Y_i \subseteq W_i$,
\item $W_i$, $W_{i'}$ are similar,
\item $\left|W_i \right| <\left| Y \right|$.
\end{enumerate}

Fix $f\in \Iso(X)$ such that $f(x_0) \in Y_1$ for some $x_0 \in Y_0$. Such $f$ and $x_0$ exist by the assumption that $E_0 \sim_0 E_1$. Put 
\[ W_0= f^{-1}[Y_1] \cup Y_0,\, W_1= f[Y_0] \cup Y_1. \] 
Then, obviously, Condition (1) holds for $W_0, W_1$, and
\[ \left| W_0 \right|, \, \left| W_1 \right| < \left| Y_0 \cup Y_1 \right| \]
because $f(x_0) \in W_1$. In particular, Condition (3) holds as well. We show Condition (2).
Put $U=f^{-1}[Y_1] \setminus Y_0$. Then $W_0=Y_0 \cup U$, and $f$ witnesses that $W_0$ is similar to $W_1$ because $Y_0 \cap U =\emptyset$, and
\[ f[W_0]=f[Y_0 \cup U]=f[Y_0] \cup f[U]=f[Y_0] \cup Y_1=W_1.\]

Suppose now that $n>1$, and that we have already found $W_i$, for some $0<l<n$ and $i \leq l$, that satisfy Conditions (1)-(3), and we have
\begin{equation}
\label{eq1cor}
\left| W_l \right| < \left| Y_0 \cup \ldots \cup Y_l \right|.
\end{equation}

Fix $f \in \Iso(X)$ such that $f[E_l]=E_{l+1}$.  Put $W_{l+1}= f[W_l] \cup Y_{l+1}$. Then, obviously, Condition (1) holds, and
\[ \left| W_{l+1} \right| < \left| Y_0 \cup \ldots \cup Y_{l+1} \right| \]
because
\[ \left| W_{l} \right| < \left| Y_0 \cup \ldots \cup Y_{l} \right|.\]

Put $U=Y_{l+1} \setminus f[W_l]$, and for each $i \leq l$ fix $f_i \in \Iso(X)$ such that $f_i[W_i]=f[W_l]$. This is possible by Condition (2). Put $W'_i=W_i \cup f^{-1}_i[U]$. Then each $W'_i$ is similar to $W_{l+1}$ because  $W_i \cap f_i^{-1}[U] =\emptyset$, and
\[ f_i[W'_i]=f_i[W_i \cup f^{-1}_i[U]]=f_i[W_i] \cup f_i[f_i^{-1}[U]]=f[W_l] \cup U=W_{l+1}.\]

Therefore $W'_0, \ldots, W'_l, W_{l+1}$ satisfy Conditions (1)-(3), and (\ref{eq1cor}) holds for $W_{l+1}$. Thus, we can replace $W_i$ with $W'_i$ for $i \leq l$. This finishes the inductive construction of sets $W_0, \ldots, W_n$.

Now by the inductive assumption and by Condition (3), we can fix finite $Z_i \subseteq E_i$, for $i \leq n$, such that $W_i \subseteq Z_i$, and every $X$-extendable partial isometry of $W_i$ is extendable to some $X$-extendable isometry of $Z_i$. Moreover, by Condition (2), for every $i, i' \leq n$ there exists an $X$-extendable bijection $g:Z_i \rightarrow Z_{i'}$ such that $g[W_i]=W_{i'}$. Put $Z=\bigcup_i Z_i$.

Suppose that $f$ is an $X$-extendable partial isometry of $Y$. Set $i=0$. Define a directed graph $A_i$ on the set $\{ 0, \ldots, n \}$ by introducing an edge $(j,j')$ if there exists $x \in Y_j$ such that $f(x) \in Y_{j'}$. Observe that every vertex in $A_{i}$ has at most one outgoing edge and at most one incoming edge.

If  $i$ is a vertex with no outgoing edge, then there must exist $i' \leq n$ with no incoming edge because the number of outgoing edges is always equal to the number of incoming edges. As $Z_i$, $Z_{i'}$ are similar, we can find an $X$-extendable isometry $h_i:Z_i \rightarrow Z_{i'}$. Define also $A_{i+1}=A_i \cup (i,i')$. Observe that every vertex in $A_{i+1}$ has at most one outgoing edge and at most one incoming edge as well.

Suppose now that $(i,i') \in A_i$ for some $i' \leq n$ . Fix an $X$-extendable bijection $g:Z_i \rightarrow Z_{i'}$ such that $g[W_i]=W_{i'}$. Then $g^{-1}f \upharpoonright Y_i$ is an $X$-extendable partial isometry of $W_i$, so we can extend it to some $h \in \EIso(Z_i)$. Then $h_i=gh$ is an $X$-extendable bijection between $Z_i$ and $Z_{i'}$ extending $f \upharpoonright Y_i$. Finally, put $A_{i+1}=A_i$.

In this way we can inductively define $X$-extandable isometric bijections $h_i$ and graphs $A_i$ for $i \leq n$. Since every vertex in $A_n$ has exactly one outgoing edge and at most one incoming edge, actually every vertex in $A_n$ has exactly one incoming edge as well. Therefore the mapping $\bigcup_i h_i$ is an $X$-extendable bijection between $Z$ and $Z$, that is, it is an $X$-extendable isometry of $Z$ extending $f$


\end{proof}





%


\section{Main results}

\begin{lemma}
\label{le:max}
Let $X$ be an ultrametric space, and let $\BB$ be an FD family.
\begin{enumerate}
\item  Every ball which is $\BB$-non-extremal in $X$, is contained in a ball which is r-maximal $\BB$-non-extremal in $X$. Moreover,  $\mathcal{O}^\BB(D) \subseteq [D]$ for every  ball $D$ which is r-maximal $\BB$-non-extremal in $X$,
\item if $C,C'$ are balls such that $C' \in [C]$, and none of $C,C'$ contains an element of $\BB$, then $C' \in \mathcal{O}^\BB(C)$,
\item if $C$ is a ball which is  r-maximal $\BB$-non-extremal in $X$, and $\BB'$ is an FD family strengthening $\BB$, and such that none of elements of $[C]$ contains an element of $\BB'$, then $\mathcal{O}^{\BB'}(C)=[C]$, and $\tau(Id,\BB')$ acts on $[C]$ as a full symmetric group.
%
%
\end{enumerate}
\end{lemma}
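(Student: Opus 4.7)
The plan is to treat the three parts in order, with (2) serving as the main extension tool for (3).

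For (1), I start from the surjection $\mathcal{O}^\BB(C) \to \mathcal{O}^\BB(B)$, $g[C] \mapsto g[B]$, which is well defined for any ball $B \supseteq C$ because $g[B]$ is the unique ball of radius $r_B$ containing $g[C]$. Its fibres are the classes of the relation $C_i \sim C_j \iff \dist(C_i,C_j) < r_B$ on $\mathcal{O}^\BB(C) = \{C_1,\dots,C_m\}$; transitivity of $\sim$ comes from (\ref{eq0}). Since $m$ is finite, $r^* := \max_{i,j} \dist(C_i,C_j)$ is attained, and the ball $D$ of radius $r^*$ containing $C$ satisfies $|\mathcal{O}^\BB(D)| \geq 2$ while every strictly larger containing ball has $|\mathcal{O}^\BB(\cdot)| = 1$. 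This gives the r-maximal $\BB$-non-extremal $D \supseteq C$. For the ``moreover'' clause, take $g \in \tau(\Id,\BB)$ with $g[D] \neq D$; then $D$ and $g[D]$ are disjoint balls of the common radius $r_D$. If $\dist(D,g[D]) = s > r_D$, then the ball $D^+ \supseteq D$ of radius $s$ satisfies $g[D] \cap D^+ = \emptyset$ (points of $g[D]$ are at distance exactly $s$ from any $c \in D$), so $g[D^+] \supseteq g[D]$ forces $g[D^+] \neq D^+$, making $D^+$ $\BB$-non-extremal and contradicting the r-maximality of $D$. Hence $\dist(D,g[D]) = r_D$ and $g[D] \in [D]$.

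For (2), fix $C, C' \in [C]$ such that neither contains any element of $\BB$. A short ultrametric argument shows each $B \in \BB$ is either disjoint from both $C,C'$ or contains both: if $B \cap C \neq \emptyset$, then $C \not\supseteq B$ forces $B \supsetneq C$, so $r_B > r_C$, and $d(c,c') = r_C < r_B$ places $C'$ inside $B$ as well; an analogous check rules out $B \cap C' \neq \emptyset = B \cap C$. Split $\BB = \BB_0 \sqcup \BB_1$ accordingly. Pick $y_C \in C$, set $y_{C'} := h(y_C)$ for any $h \in \Iso(X)$ with $h[C] = C'$ (so $\db{y_C} = \db{y_{C'}}$), and pick any $y_B \in B$ for $B \in \BB_0$. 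Define
$f: \{y_C\} \cup \{y_B : B \in \BB_0\} \to \{y_{C'}\} \cup \{y_B : B \in \BB_0\}$
by $y_C \mapsto y_{C'}$ and $y_B \mapsto y_B$. The only non-trivial distance-preservation check is $d(y_C,y_B) = d(y_{C'},y_B)$ for $B \in \BB_0$, which reduces to $\dist(C,B) = \dist(C',B)$ and follows from (\ref{eq0}) applied to $(c,c',b)$ with $d(c,c') = r_C$, in the two cases $\dist(C,B) = r_C$ and $\dist(C,B) > r_C$. Lemma \ref{le:ext0} extends $f$ to $g \in \Iso(X)$. Then $g$ setwise fixes each $B \in \BB_0$ (it pointwise fixes $y_B \in B$) and each $B \in \BB_1$ (both $y_C, y_{C'} \in B$), so $g \in \tau(\Id,\BB)$, and $g[C] = C'$ because $g(y_C) = y_{C'}$ and both balls have radius $r_C$.

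For (3), the inclusion $\mathcal{O}^{\BB'}(C) \subseteq [C]$ is immediate from (1) via $\tau(\Id,\BB') \subseteq \tau(\Id,\BB)$, and $[C] \subseteq \mathcal{O}^{\BB'}(C)$ is exactly (2) applied with $\BB'$ in place of $\BB$ (the hypothesis supplies its premise, including for $C$ itself). For the full symmetric action, the same case analysis used in (2) shows every $B \in \BB'$ is either disjoint from $\bigcup[C]$ or contains it; the latter $B$ are automatically setwise fixed once $[C]$ is preserved. Given $\sigma \in \Sym([C])$, I enumerate $[C]$, choose orbit-coherent representatives $y_D \in D$ with $\db{y_D} = \db{y_C}$, and $y_B \in B$ for each $B \in \BB'$ disjoint from $\bigcup[C]$, then set $f(y_D) = y_{\sigma(D)}$, $f(y_B) = y_B$. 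The same isosceles argument as in (2) shows that $\dist(D,B)$ is independent of $D \in [C]$, so $f$ is a partial isometry. When $[C]$ is finite, Lemma \ref{le:ext0} applies directly and the resulting $g \in \tau(\Id,\BB')$ induces $\sigma$ on all of $[C]$. When $[C]$ is infinite, I instead run the back-and-forth from the proof of Lemma \ref{le:ext0}, at stage $n$ forcing $y_{D_n}$ into the domain with image $y_{\sigma(D_n)}$, obtaining $g$ on a countable dense subset of $X$ which extends to the required isometry.

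The main obstacle is the repeated ultrametric isosceles bookkeeping, notably the equality $\dist(D_i, B) = \dist(D_j, B)$ for $D_i, D_j \in [C]$ and $B$ disjoint from both, which splits into two sub-cases according to whether this common distance equals $r_C$ or strictly exceeds it, each resolved by (\ref{eq0}). A secondary subtlety is upgrading Lemma \ref{le:ext0} to an infinite back-and-forth in the infinite case of (3), so as to interlace the dense-extension construction with the permutation constraint $g(y_{D_n}) = y_{\sigma(D_n)}$; this is routine but must be presented with care.
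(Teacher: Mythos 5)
Your proof is correct and follows essentially the same route as the paper: Point (1) via taking $D$ to be the ball of radius $\diam(\mathcal{O}^\BB(C))$ around $C$ and observing that moving any strictly larger ball would push $\dist(C,f[C])$ past that diameter, and Points (2)--(3) via the isosceles property (\ref{eq0}) combined with Lemma \ref{le:ext0} applied to orbit-coherent representatives. The paper merely asserts Points (2) and (3); your construction of the partial isometry fixing a point in each ball of $\BB$ disjoint from $[C]$ is exactly the intended justification, and the infinite-$[C]$ back-and-forth you mention never actually arises since $[C]=\mathcal{O}^{\BB'}(C)\subseteq\mathcal{O}^{\BB}(C)$ is finite.
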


\begin{proof}
We prove Point (1). 
Let $C$ be a ball which is $\BB$-non-extremal in $X$. Let $r=\diam(\mathcal{O}^\BB(C))$, and let $D$ be the unique ball of radius $r$ that contains $C$. Clearly, $D$ is $\BB$-non-extremal in $X$. Suppose that $D'$ is a ball of radius $r'>r$ such that $D \subseteq D'$, and there exists $f \in \tau(Id,\BB)$ such that $f[D]  \neq D'$. But then $\dist(D',f[D']) \geq r'$, so $\dist(C,f[C])>r$, which is a contradiction. Using a similar argument, one can show that $\mathcal{O}^\BB(D) \subseteq [D]$ for every ball $D$, which is r-maximal $\BB$-non-extremal in $X$.

Point (2) directly follows from (\ref{eq0}). Point (3) follows from Points (1) and (2).
\end{proof}

\begin{lemma}
\label{le:agree}
Let $X$ be an ultrametric space, let $\mathcal{B}$ be an FD family such that $X$ has the $\mathcal{B}$-extremality property, and let $Z \subseteq X$ be a finite set intersecting every element of $\BB$.  There exists an FD family $\BB'$ such that 
\begin{enumerate}
\item  $\BB'$ strengthens $\BB$,
\item $X$ has the $\BB'$-extremality property,
\item  every $g \in \Iso(X)$ permuting $Z$ also permutes $\BB'$.
\end{enumerate}
\end{lemma}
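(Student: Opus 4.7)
The construction proceeds in two stages. For the first, pick a radius $r > 0$ strictly smaller than $\min\{d(z, z') : z, z' \in Z,\ z \neq z'\}$ and $\min\{r_B : B \in \mathcal{B}\}$, both minima taken over finite sets (and vacuous if $|Z| \leq 1$ or $\mathcal{B} = \emptyset$, respectively). For each $z \in Z$, set $B_z = \{x \in X : d(x, z) < r\}$ and let $\mathcal{B}_0 = \{B_z : z \in Z\}$. By the ultrametric inequality and the choice of $r$, the $B_z$'s are pairwise disjoint, so $\mathcal{B}_0$ is an FD family. Every $B \in \mathcal{B}$ is intersected by some $z \in Z \cap B$; since $r < r_B$ and $z$ is a center of $B$, one has $B_z \subseteq B$, and thus any $g' \in \tau(\Id, \mathcal{B}_0)$ fixes $B$ as the unique ball of radius $r_B$ containing $B_z$. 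This yields condition (1). Since all balls $B_z$ share the common radius $r$, any $g \in \Iso(X)$ with $g[Z] = Z$ sends $B_z$ to $B(g(z), r) = B_{g(z)} \in \mathcal{B}_0$, yielding condition (3).

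Condition (2) is the main obstacle. Restricting from $\tau(\Id, \mathcal{B})$ to the smaller group $\tau(\Id, \mathcal{B}_0)$ may collapse an infinite $\mathcal{B}$-orbit of a ball $C$ into a finite non-singleton one. Explicitly, if $B_z \subsetneq C$ and $C$'s smallest $\mathcal{B}$-ancestor contains only finitely many sibling balls similar to $C$, then fixing $B_z$ forces the ancestor to be fixed, yet $C$ itself may still be freely permuted with those siblings. To cure this, I would iteratively augment $\mathcal{B}_0$, building a chain $\mathcal{B}_0 \subseteq \mathcal{B}_1 \subseteq \cdots$ of FD families, each $G$-invariant (for $G = \{g \in \Iso(X) : g[Z] = Z\}$), each strengthening the previous, and eventually attaining extremality. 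At each stage $i$, if $\mathcal{B}_i$ fails extremality, apply Lemma~\ref{le:max}(1) to select an r-maximal $\mathcal{B}_i$-non-extremal ball $D$; then $\mathcal{O}^{\mathcal{B}_i}(D) \subseteq [D]$ is finite. Adjoin to $\mathcal{B}_i$ the finite $G$-orbit of $[D] \cup \{D\}$, refining if necessary by replacing older balls contained in newly added ones so that the FD property is maintained. After the addition, every ball in $[D] \cup \{D\}$ becomes a fixed singleton under the updated group, killing non-extremality at scale $r_D$.

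The principal technical hurdles are ensuring $G$-invariance of each augmentation and proving termination. For $G$-invariance, one must verify that the $G$-orbit of $[D] \cup \{D\}$ is finite; this rests on the fact that $D$'s position relative to the finite set $Z$ pins $D$'s $G$-orbit to a finite set, since the similarity structure interacts tamely with the $G$-action (a $g \in G$ preserves the equivalence relation on balls generated by ``similar and at distance $r_D$''). For termination, I would argue by a descending-scale induction: at each new stage, the radii at which fresh non-extremality can emerge strictly decrease, are bounded below by $r$, and belong to the finite descending chain of radii used at earlier stages; hence the iteration halts after finitely many steps, and the final $\mathcal{B}_k = \mathcal{B}'$ satisfies all three conditions.
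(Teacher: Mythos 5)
There is a genuine gap, and it sits exactly where you flag the ``main obstacle'': your first stage already commits to a family for which condition (2) can be unattainable, and the termination claim for your repair loop is false. The root problem is that $\BB_0$ consists of \emph{new, small} balls $B_z$ rather than images of balls of $\BB$, so the hypothesis that $X$ has the $\BB$-extremality property gives no control whatsoever over the orbits $\mathcal{O}^{\BB_0}(C)$. Concretely, let $B=\{z\}\cup\bigcup_{k\geq 1}Q_k$ with $Q_k=\{a_k,b_k\}$, $d(z,x)=d(a_k,b_k)=2^{-k}$ for $x\in Q_k$, and $d(x,y)=2^{-\min(k,l)}$ for $x\in Q_k$, $y\in Q_l$, $k\neq l$; let $X$ consist of infinitely many copies $B^{(0)},B^{(1)},\ldots$ of $B$ at mutual distance $1$. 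This is a Polish ultrametric space with the extremality property (every ball has an infinite orbit or a singleton orbit, since the copies can be permuted freely), so the lemma applies with $\BB=\emptyset$ and $Z=\{z^{(0)},z^{(1)}\}$, and the correct output is simply $\BB'=\emptyset$. Your $\BB_0$ forces $r<1$, hence $B_{z^{(0)}}\subseteq B^{(0)}$; any isometry fixing $B_{z^{(0)}}$ setwise fixes $B^{(0)}$ setwise and therefore fixes $z^{(0)}$, the unique point of $B^{(0)}$ with infinite distance spectrum. Consequently, for \emph{every} $k$ the singleton $\{a^{(0)}_k\}$ is r-maximal $\BB_0$-non-extremal with orbit $\{\{a^{(0)}_k\},\{b^{(0)}_k\}\}$ of size two. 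By Points (1) and (2) of Lemma \ref{le:max}, any finite FD family strengthening $\BB_0$ leaves all but finitely many of these pairs with orbit exactly of size two, so \emph{no} FD family strengthening $\BB_0$ yields the extremality property. In particular your iteration cannot terminate, and the assertion that fresh non-extremality arises only at radii bounded below by $r$ and drawn from a finite descending chain is simply wrong: here it arises at every radius $2^{-k}$, all of them below $r$.

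For contrast, the paper never introduces new balls: it sets $\BB''=\{g[B]:B\in\BB,\ g\in\Iso(X)\mbox{ permutes }Z\}$ (finite precisely because $Z$ meets every element of $\BB$) and takes $\BB'$ to be the minimal elements of $\BB''$. Invariance and strengthening are then as routine as in your stage one, but the verification of condition (2) --- the step you could not carry out --- is done by \emph{transport back to $\BB$}: if $C$ were r-maximal $\BB'$-non-extremal, one picks $B'\in\BB'$ realizing $\dist(C,\BB')$ and $g$ permuting $Z$ with $g[B']\in\BB$, and observes that $\mathcal{O}^{\BB}(g[C])$ is infinite (by $\BB$-extremality together with Point (2) of Lemma \ref{le:max}) yet contained in the finite set $\mathcal{O}^{\{g[B']\}}(g[C])$, a contradiction. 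Any correct argument must route condition (2) back through the $\BB$-extremality hypothesis in some such way; a purely radius-counting argument cannot succeed.
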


\begin{proof}
Let 
\[ \BB''=\{ g[B]: B \in \BB, \, g \in \Iso(X) \mbox{ permutes } Z   \}. \]
Note that $\BB''$ is finite because $Z$ intersects every element of $\BB$.

Let $\BB' \subseteq \BB''$ be a subfamily of $\BB''$ consisting of all balls which do not contain any strictly smaller ball in $\BB''$. Clearly, $\BB'$ is an FD family strengthening $\BB$. First we show that every $g \in \Iso(X)$ permuting $Z$ also permutes $\BB'$. Suppose that $g[B] \not \in \BB'$ for some $g \in \Iso(X)$ permuting $Z$, and $B \in \BB'$. Then there exists $C \in \BB''$ such that $C \subsetneq g[B] $. But then $g^{-1}[C] \in \BB''$, and $g^{-1}[C] \subsetneq B$, contradicting the assumption that $B \in \BB'$.

We prove now that $X$ has the $\BB'$-extremality property. Suppose it does not, and let $C$ be a ball which is r-maximal $\BB'$-non-extremal in $X$. Such $C$ exists, and $\mathcal{O}^{\BB'}(C) \subseteq [C]$ by Point (1) of Lemma \ref{le:max}. Since $C$ is $\BB'$-non-extremal in $X$, and $\BB'$ is finite, Points (1) and (2) of  Lemma \ref{le:max} imply that $[C]$ must be finite and non-trivial.

Clearly, none of the elements of $\mathcal{O}^{\BB'}(C)$ contains an element of $\BB'$. If there exists $B' \in \BB'$ and $C' \in \mathcal{O}^{\BB'(C)}$ such that $C' \subsetneq B'$, then $B'$ has radius strictly larger than $C'$. Therefore $C' \subsetneq B'$ for every $C' \in [C]$, and there exists $g \in \Iso(X)$ permuting $Z$ such that $g[B'] \in \BB$. But then $C' \subsetneq g[B']$ for each $C' \in [g[C]]$, that is, 
\[ \mathcal{O}^\BB(g[C])=\mathcal{O}^{\{g[B]\}}(g[C])=[g[C]] \]
is finite and non-trivial, which means that $X$ does not have the $\BB$-extremality property; a contradiction. Therefore every element of $\mathcal{O}^{\BB'}(C)$ is disjoint from every element of  $\BB'$.


Now fix  $B' \in \mathcal{B}'$ such that  
\begin{equation}
\label{eq:agree:1}
\dist(C,B')=\dist(C,\mathcal{B}').
\end{equation}
Observe that $C$ is $\{B'\}$-non-extremal.  Fix $g \in \Iso(X)$ permuting $Z$ and such that $g[B'] \in \BB$, and observe that, since $\mathcal{B}'$ is closed under permutations of $Z$, none of $g[C']$, for $C' \in \mathcal{O}^{\mathcal{B}'}(C)$, contains an element of $\mathcal{B}$. By Point (2) of Lemma \ref{le:max}, $\mathcal{O}^{\mathcal{B}}(g[C])$ is not a singleton.

However, if $C$ is $\{B'\}$-non-extremal, we also have that $g[C]$ is $\{g[B'] \}$-non-extremal. But $\mathcal{O}^{\mathcal{B}}(g[C]) \subseteq \mathcal{O}^{\{ g[B'] \}}(g[C])$, which is impossible because the former set is infinite while the latter is finite. Thus, $X$ has the $\BB'$-extremality property.

\end{proof}  

\begin{lemma}
\label{le:one:0}
Let $X$ be an ultrametric space, which has the extremality property, and let $Z \subseteq X$ be finite. There exists $h \in \Iso(X)$ such that 
\begin{enumerate}
\item $d(x,h(x'))=d(g(x),hg(x'))$;
\item $d(x,h(x'))=d(g(x),h(x'))$;
\item  $h(x) \in Z$ implies that $h(x)=x$
\end{enumerate}
for all $x,x' \in Z$ and $g \in \Iso(X)$ permuting $Z$.
\end{lemma}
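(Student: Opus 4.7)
The plan is to produce $h$ by translating $Z$ to a region of $X$ ``far from'' $Z$ in the ultrametric sense, so that all cross-distances $d(x, h(x'))$ for $x, x' \in Z$ collapse to a single common value. Then (1) and (2) become tautological, and (3) holds vacuously because $h[Z] \cap Z = \emptyset$. Concretely, let $D$ be the smallest ball of $X$ containing $Z$; by the extremality hypothesis, $|\mathcal{O}(D)| \in \{1, \infty\}$.

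If $|\mathcal{O}(D)| = \infty$ (the easy case), I pick any $D' \in \mathcal{O}(D) \setminus \{D\}$. As distinct balls of equal radius in an ultrametric space, $D$ and $D'$ are disjoint, and in fact (\ref{eq0}) forces $d(y, y') = d(D, D') > \diam(D)$ for all $y \in D$, $y' \in D'$. Choosing any $h \in \Iso(X)$ with $h[D] = D'$, for every $x, x' \in Z$ and $g \in \Iso(X)$ permuting $Z$ the points $x, g(x)$ lie in $D$ while $h(x'), hg(x')$ lie in $D'$, so all four distances appearing in (1) and (2) equal $d(D, D')$. Condition (3) is vacuous since $h[Z] \subseteq D'$ is disjoint from $Z \subseteq D$.

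The rigid case $|\mathcal{O}(D)| = 1$ is more delicate, and I would handle it by strong induction on $|Z|$ (the base $|Z| \leq 1$ being trivial via $h = \Id$). Since $D$ is minimal, $Z$ meets at least two disjoint proper sub-balls $B_1, \ldots, B_k$ of $D$ (with $k \geq 2$), and $Z_i := Z \cap B_i$ satisfies $|Z_i| < |Z|$. For each $B_i$ with infinite $\Iso(X)$-orbit I translate $B_i$ to a disjoint copy inside $D$; for each fixed $B_i$, the group $G_Z := \{g \in \Iso(X) : g[Z] = Z\}$ satisfies $g[B_i] = B_i$ and thus preserves $Z_i$, so I apply the inductive hypothesis to $Z_i$ to obtain local pieces. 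These local partial isometries are then stitched together into a partial isometry defined on $Z$ and extended to a global $h \in \Iso(X)$ via Lemma~\ref{le:ext0} (noting that each $h(x)$ lies in $\db{x}$ by construction). The main technical obstacle is precisely this rigid case: coordinating the translations in infinite-orbit sub-balls with the inductive solutions in fixed sub-balls into a single isometry of $X$ satisfying (1), (2), (3) against the full group $G_Z$, and verifying that the inductive solutions inside fixed sub-balls can be realized as restrictions of elements of $\Iso(X)$ rather than merely of $\Iso(B_i)$.
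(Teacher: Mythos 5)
Your skeleton coincides with the paper's: induction on $|Z|$, the dichotomy ``some ball containing $Z$ has non-trivial orbit'' versus ``every such ball is fixed'', and, in the rigid case, a decomposition of $Z$ into the sub-balls of radius $r=\diam(Z)$, translating the infinite-orbit pieces to disjoint copies and recursing on the fixed pieces. The easy case is handled exactly as in the paper (Lemma \ref{le:one:aux}). But what you set aside as ``the main technical obstacle'' is the actual content of the lemma, and it is not supplied. Three specific ingredients are missing. First, to invoke the inductive hypothesis on a fixed sub-ball $E$ you must know (i) that $[E]$ finite forces $[E]=\{E\}$, so that every $g\in\Iso(X)$ permuting $Z$ satisfies $g[E]=E$ and hence $g\restriction E$ is an isometry of $E$ permuting $Z_E$ (this is what lets you test the inductive $h_E$ against the full group $G_Z$ rather than just $\Iso(B_i)$), and (ii) that $E$ itself inherits the extremality property, without which the inductive hypothesis does not even apply; the paper proves both in a separate Claim, using that $\mathcal{O}(E)=[E]$ because every strictly larger ball is fixed. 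Second, the translated copies $F_E$ cannot be chosen arbitrarily ``inside $D$'': they must be chosen in $[E]$ and disjoint from \emph{every} ball of $\EE$ and from each other, so that $\dist(E',F_E)=r$ for all $E'\in\EE$. This is exactly what is needed for condition (2), where $g$ acts on only one argument: one must have $\dist(g[E],F_{E'})=r$, which fails if $F_{E'}$ merely avoids $E'$ but happens to meet $g[E]$. Third, the case analysis verifying (1) and (2) (in particular the mixed case where one index lies in a moved sub-ball and the other in a fixed one, and the case $E=E'$ fixed, where one must pass to the inductive statement for $g\restriction E$) is entirely absent.

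A smaller point: ``the smallest ball of $X$ containing $Z$'' need not exist with the paper's (open-ball) convention when distances accumulate at $\diam(Z)$ from above; the robust formulation of the dichotomy is ``there exists a ball $B\supseteq Z$ with $|\mathcal{O}(B)|>1$'' versus ``$|\mathcal{O}(B)|=1$ for every ball $B\supseteq Z$'', and in the rigid case the decomposition must be taken at radius exactly $\diam(Z)$ so that all cross-distances between distinct members of $\EE$ equal $r$. As written, the proposal is a correct plan with the right approach but not a proof.
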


We start with a lemma which takes care of the simplest case of Lemma \ref{le:one:0}.

\begin{lemma}
\label{le:one:aux}
Let  $X$ be an ultrametric space, and let $Z \subseteq X$ be finite. If there exists a ball $B $ in $X$ such that $Z \subseteq $B, and $\left| \mathcal{O}(B) \right|>1$, then Lemma \ref{le:one:0} holds for $X$ and $Z$.
\end{lemma}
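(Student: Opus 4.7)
The plan is to exploit the hypothesis $|\mathcal{O}(B)|>1$ directly: pick any $B' \in \mathcal{O}(B)$ with $B' \neq B$, and any $h \in \Iso(X)$ sending $B$ to $B'$. This $h$ is the required witness. The intuition is that since $Z \subseteq B$, the image $h[Z]$ lies entirely inside the disjoint ball $B'$, so condition (3) becomes vacuous, and conditions (1) and (2) reduce to the fact that in an ultrametric space the distance between disjoint balls of equal radius is realized as a single common value.

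More precisely, first I would record the following standard ultrametric observation, obtained by applying (\ref{eq0}) to triples: if $B, B'$ are disjoint balls of the same radius $r$, then the distance $d(y, y')$ is the same for every $y \in B$ and every $y' \in B'$. Call this common value $s$; note $s \geq r > d(z_1, z_2)$ for any $z_1, z_2$ in the same ball. Since balls in an ultrametric space are either nested or disjoint, $B \neq B'$ with equal radii forces $B \cap B' = \emptyset$, so this applies.

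Now for the verification: let $r$ be the radius of $B$ (and $B'$), let $s$ be the common inter-ball distance, and fix $x, x' \in Z$ and $g \in \Iso(X)$ permuting $Z$. Since $g[Z] = Z \subseteq B$ we have $g(x), g(x') \in B$; and since $h[B] = B'$ we have $h(x'), hg(x') \in B'$. Therefore
\[ d(x, h(x')) = s = d(g(x), h(x')) = d(g(x), hg(x')), \]
giving (1) and (2) at once. For (3): $h(x) \in B'$ while $Z \subseteq B$, so $h(x) \notin Z$, and the implication is vacuously true.

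I do not expect any real obstacle — the entire argument rests on one elementary ultrametric fact (constancy of cross-ball distance) and the observation that $h$ shifts $Z$ into a disjoint copy, making $g$'s action on $Z$ invisible to distances measured from $h[Z]$. The only thing to be slightly careful about is confirming that the equal-radius version of cross-ball constancy really follows from (\ref{eq0}), which is a short triangle-by-triangle check.
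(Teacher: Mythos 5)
Your proof is correct and follows essentially the same route as the paper: the paper likewise takes $h\in\Iso(X)$ with $h[B]$ disjoint from $B$ and observes that every distance appearing in conditions (1) and (2) equals $\dist(B,h[B])$, while (3) is vacuous because $h[Z]\subseteq h[B]$ misses $Z$. Your explicit justification of the constancy of the cross-ball distance via (\ref{eq0}) is exactly the fact the paper uses implicitly, so there is nothing to add.
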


\begin{proof}
Fix $B$ as in the statement of the lemma. Since $\left|\mathcal{O}(B)\right|>1$, there exists $h \in \Iso(X)$ such that $h[B]$ is disjoint from $B$. We show that $h$ is as required.

Put $r=\dist(B,h[B])$. Clearly, $r>0$. Fix $x, x' \in Z$ and $g \in \Iso(X)$ permuting $Z$. Then
\[d(x,h(x'))=d(g(x),hg(x'))=\dist(g(x),h(x'))=r \]
because the assumption that $g \in \Iso(X)$ permutes $Z$ implies that $g(x), g(x') \in B$, while $h(x), h(x'), hg(x') \in h[B]$. Therefore Points (1)-(3) of  Lemma \ref{le:one:0} hold for $x,x'$ and $g$.
\end{proof}

\begin{proof}[Proof of Lemma \ref{le:one:0}]
We proceed by induction on the size of $Z$. Fix a finite $Z \subseteq X$. 
If $Z$ is a singleton, then $h=Id$ is, clearly, as required. 
Suppose now that the lemma holds for sets of size $< \left| Z \right|$. If there exists a ball $B$ as in Lemma \ref{le:one:aux}, then we are done.
Otherwise $\left| \mathcal{O}(B) \right|=1$ for every ball $B$ such that $Z \subseteq B$.  Let $r=\diam(Z)$, and let $\EE$ be an FD family consisting of all balls of radius $r$, which have non-empty intersection with $Z$. Note that 
\begin{equation}
\label{eq:one:1}
\dist(E,E')=r>0 
\end{equation}
for every $E,E' \in \EE$ with $E \neq E'$. Put $Z_E=Z \cap E$ for $E \in \EE$.

\medskip

\emph{Claim}. $\mathcal{O}(E)=[E]$ for $E \in \EE$. Moreover, if $E \in \EE$, and $[E]$ is finite, then $[E]=\{E\}$, and $E$ has the extremality property.

\medskip

\emph{Proof of the claim.} Since every ball of radius $>r$ that contains $E$ must contain $Z$ as well, we have that $\left| \mathcal{O}(B) \right|=1$ for every such ball $B$. Therefore the same argument as in the proof of Point (1) of Lemma \ref{le:max} shows that $\mathcal{O}(E)=[E]$. Since $X$ has the extremality property, either $[E]$ is infinite or $[E]=\{E\}$. In the latter case, if a ball $B \subseteq E$ is non-extremal in $E$, then it is non-extremal in $X$ as well. But $X$ has the extremality property, so $E$ also must have the extremality property. This finishes the proof of the claim.
 
\medskip

Define
\[ \FF =\{ E \in \EE:  [E] \mbox{ is infinite} \}. \]
Then for every $E \in \FF$ we can find $F_E \in [E]$ so that
\begin{enumerate}[(a)]
\item $F_E \cap E' = \emptyset$ for $E' \in \EE$;
\item $F_E \cap F_{E'} = \emptyset$ for every $E' \in \FF$ with $E \neq E'$.
\end{enumerate}

In particular, 
\begin{enumerate}[(a)]
\setcounter{enumi}{2}
\item $\dist(E,F_{E'})=r$ for every $E \in \EE$, $E' \in \FF$.
\end{enumerate}

Fix $E \in \EE$. If $E \in \FF$, then choose an $X$-extendable bijection $h_E:E \rightarrow F_E$. Otherwise, by the claim, $E$ has the extremality property. Also, $\left| Z_E \right|< \left| Z \right|$ because $\left| \EE \right|>1$. 
Thus, by the inductive assumption, we can find $h_E \in \Iso(E)$ witnessing that Lemma \ref{le:one:0} holds for $E$ and $Z_E$.

Clearly, $\bigcup_{E \in \EE} h_E$ is a bijection onto its image. Fix $x \in E$, $y \in E'$, where $E,E' \in \EE$, $E \neq E'$. Then $d(x,y)=r$ by  (\ref{eq:one:1}). If $E, E' \in \FF$, then $h(x) \in F_E$, $h(y) \in F_{E'}$, that is, $d(h_E(x),h_{E'}(y))=r$ by Points (b) and (c). Similarly, if $E \in \FF$, $E' \not \in \FF$, then $h(x) \in F_E$, $h(y) \in E'$, so $d(h(x), h(y))=r$ by Point (c). Finally, if $E, E' \not \in \FF$, then $h(x) \in E$, $h(y) \in E'$, and $d(h(x),h(y))=r$. Thus, $\bigcup_{E \in \EE} h_E$ is isometric. Since every $h_E$ is $X$-extendable, $\bigcup_{E \in \EE} h_E$ is also $X$-extendable, so we can extend it to an isometry $h \in \Iso(X)$. 

We verify that $h$ satisfies Point  (1). Fix $x, x' \in Z$, and $g \in \Iso(X)$ permuting $Z$. Let $E,E' \in \EE$ be such that $x \in E$, $x' \in E'$. 

\emph{Case 1:} $E' \in \FF$. Then $g[E'] \in \FF$, so by Point (c), 
\[ d(x,h(x'))=\dist(E,h[E'])=\dist(E,F_{E'})=r, \]
\[ d(g(x),hg(x'))=\dist(g[E],hg[E'])=\dist(g[E],F_{g[E']})=r. \]
%
%

\emph{Case 2:} $E' \not \in \FF$, $E \neq E'$. Then $g[E'] \not \in \FF$, so, by the claim, $h[E']=E'$, $hg[E']=g[E']$, and
\[ d(x,h(x'))=d(E,h[E'])=d(E,E')=r, \]
\[ d(g(x),hg(x'))=\dist(g[E],hg[E'])=\dist(g[E],g[E'])=r. \]

\emph{Case 3:} $E' \not \in \FF$, $E=E'$. Then, by the claim,  $E$ has the extremality property, and $[E]=\{E\}$, that is, $g[Z_E]=Z_E$. Therefore
\[ d(x,h(x'))=d(g(x),hg(x')) \]
by the inductive assumption.

Point (2) can be shown similarly to Point (1):

\emph{Case 1:} $E' \in \FF$. Then $\dist(g[E],F_{E'})=r$ by Point (a) and because $g$ permutes $\EE$. Thus, we get
\[ d(g(x),h(x'))=\dist(g[E], F_{E'})=r.\] 

\emph{Case 2:} $E' \not \in \FF$. Observe that in this case $g[E] \neq E'$, and $h[E']=E'$. Therefore
\[ d(g(x),h(x'))=\dist(g[E], E')=r.\] 

To prove that Point (3) holds, fix $x \in Z$, and $E \in \EE$ such that $x \in E$. If $E \in \FF$, then Point (a) implies that $h(x) \not \in Z$. Otherwise, $h(x)=h_E(x) \in E$, so we can use the inductive assumption.
\end{proof}

Now we consolidate Points (1) and (2) of Lemma \ref{le:one:0} into one statement.

\begin{lemma}
\label{le:one}
Let $X$ be an ultrametric space which has the extremality property, and let $Z \subseteq X$ be finite. There exists $h \in \Iso(X)$ such that
\begin{enumerate}
\item $d(x,h(x'))=d(g(x),hg'(x'))$,
\item $h(x) \in Z$ implies that $h(x)=x$
\end{enumerate}
for all $x,x' \in Z$ and $g,g' \in \Iso(X)$ permuting $Z$.
\end{lemma}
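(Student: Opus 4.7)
The plan is to apply Lemma \ref{le:one:0} directly to obtain an $h \in \Iso(X)$ satisfying its three conclusions, and then to derive the stronger invariance in Point (1) of Lemma \ref{le:one} by chaining Points (1) and (2) of Lemma \ref{le:one:0}. Point (2) of the new lemma is identical to Point (3) of Lemma \ref{le:one:0}, so it transfers verbatim, and no extra work is needed there.

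For the main claim, fix $x, x' \in Z$ and $g, g' \in \Iso(X)$ permuting $Z$. First I would use the diagonal-invariance given by Point (1) of Lemma \ref{le:one:0}, applied with the isometry $g'$, to obtain
\[ d(x, h(x')) = d(g'(x), hg'(x')). \]
Next, set $y = g'(x) \in Z$ and $y' = g'(x') \in Z$ (both in $Z$ because $g'$ permutes $Z$), and consider the isometry $gg'^{-1}$, which also permutes $Z$. Applying Point (2) of Lemma \ref{le:one:0} with this isometry and with $y, y'$ gives
\[ d(y, h(y')) = d(gg'^{-1}(y), h(y')) = d(g(x), hg'(x')). \]
Chaining the two equalities yields the desired identity $d(x, h(x')) = d(g(x), hg'(x'))$.

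There is no real obstacle: the point is simply that Lemma \ref{le:one:0}(1) lets us move $g'$ simultaneously onto both $x$ and $x'$, and then Lemma \ref{le:one:0}(2) lets us independently replace $g'(x)$ by any other $Z$-orbit image of it, in particular by $g(x)$. In effect, Points (1) and (2) of Lemma \ref{le:one:0} together give invariance under the full product action $(x,x') \mapsto (g(x), g'(x'))$ of the group of isometries permuting $Z$ on $Z \times Z$, which is exactly the content of Point (1) of Lemma \ref{le:one}.
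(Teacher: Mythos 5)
Your proposal is correct and follows essentially the same route as the paper: the paper likewise takes the $h$ from Lemma \ref{le:one:0} and chains its Points (1) and (2) (applying (1) with $(g')^{-1}$ and (2) with $(g')^{-1}g$, which is just your computation read in the opposite direction), and Point (2) of the lemma is Point (3) of Lemma \ref{le:one:0} verbatim.
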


\begin{proof}
For $x, x' \in Z$, $g,g' \in \Iso(X)$  permuting $Z$, and $h \in \Iso(X)$ as in the statement of Lemma \ref{le:one:0}, we have
\[ d(g(x),hg'(x'))=d((g')^{-1}g(x),h(g')^{-1}g'(x'))= \]
\[ =d((g')^{-1}g(x),h(x')=d(x,h(x')). \]

The first equality follows from Point (1) of Lemma \ref{le:one:0}, and the last one follows from Point (2) of this lemma.

\end{proof}

The following corollary is straightforward in light of Lemma \ref{le:one}.

\begin{corollary}
\label{co:one:1}
Let $X$ be an ultrametric space. For every ball $B$ in $X$ which has the extremality property, and every finite $Z \subseteq B$ we can fix $h=h_{B,Z}$ as in Lemma \ref{le:one} applied to $B$ and $Z$ so that the following holds. For every ball $B$ in $X$, which has the extremality property, every finite $Z \subseteq B$, and $g \in \Iso(X)$ there exists $f \in \Iso(X)$ such that  $h_{g[B],g[Z]}=f h_{B,Z} f^{-1}$.
\end{corollary}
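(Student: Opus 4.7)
The plan is to fix the family $\{h_{B,Z}\}$ coherently by transporting a single witness across each orbit of the natural $\Iso(X)$-action on pairs. Define an equivalence relation on pairs $(B,Z)$ --- with $B$ a ball of $X$ having the extremality property and $Z \subseteq B$ finite --- by declaring $(B,Z) \sim (B',Z')$ iff there exists $f \in \Iso(X)$ with $f[B] = B'$ and $f[Z] = Z'$. By the Axiom of Choice, fix a representative $(B_\CC, Z_\CC)$ in each equivalence class $\CC$, and apply Lemma \ref{le:one} to the ultrametric space $B_\CC$ and the finite set $Z_\CC$ to obtain some $h_\CC \in \Iso(B_\CC)$. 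Extend $h_\CC$ to $\tilde h_\CC \in \Iso(X)$ by setting it to be the identity off $B_\CC$; this is indeed an isometry because in an ultrametric space any two points of $B_\CC$ are equidistant from any given point outside $B_\CC$.

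For each pair $(B, Z) \in \CC$, select some $f_{B,Z} \in \Iso(X)$ with $f_{B,Z}[B_\CC] = B$ and $f_{B,Z}[Z_\CC] = Z$, choosing $f_{B_\CC, Z_\CC} = \Id$, and define $h_{B,Z} := f_{B,Z}\, \tilde h_\CC\, f_{B,Z}^{-1}$. By construction $h_{B,Z}$ lies in $\Iso(X)$, restricts to an isometry of $B$, and acts as the identity on $X \setminus B$. To verify that $h_{B,Z}|_B$ witnesses Lemma \ref{le:one} applied to $B$ and $Z$, note that $\phi := f_{B,Z}|_{B_\CC} \colon B_\CC \to B$ is an isometric bijection, that conjugation by $\phi$ is an isomorphism $\Iso(B_\CC) \to \Iso(B)$ mapping permutations of $Z_\CC$ to permutations of $Z$, and that both the distance identity of Point (1) and the fixed-point property of Point (2) transport directly from $h_\CC$ to $h_{B,Z}|_B$ using only that $\phi$ preserves distances.

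For the equivariance, given any $(B, Z)$ and $g \in \Iso(X)$, the pair $(g[B], g[Z])$ lies in the same class $\CC$ as $(B,Z)$, so $h_{g[B], g[Z]} = f_{g[B],g[Z]}\, \tilde h_\CC\, f_{g[B],g[Z]}^{-1}$; combining with $\tilde h_\CC = f_{B,Z}^{-1}\, h_{B,Z}\, f_{B,Z}$ yields $h_{g[B], g[Z]} = f\, h_{B,Z}\, f^{-1}$ with $f := f_{g[B], g[Z]}\, f_{B,Z}^{-1} \in \Iso(X)$. The corollary therefore reduces to a coherent choice of orbit representatives followed by routine conjugation bookkeeping, and no step poses a genuine obstacle; the somewhat awkward formulation of the statement is precisely suggestive of this transfer construction.
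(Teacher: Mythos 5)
Your proposal is correct and is essentially the paper's own argument: the paper likewise fixes one witness $h$ from Lemma \ref{le:one} per similarity class of pairs $(B,Z)$ and defines all other $h_{B',Z'}$ by conjugating with a chosen isometry carrying the representative pair to $(B',Z')$. Your write-up is merely more explicit about extending $h_\CC$ by the identity off $B_\CC$ and about checking that the conjugated maps still satisfy Lemma \ref{le:one}, details the paper leaves implicit.
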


\begin{proof}
Fix a ball $B$, finite $Z \subseteq B$, and $h_{B,Z}=h$ given by Lemma \ref{le:one}. For every $B'$ similar to $B$, and $Z' \subseteq B'$ such that $B \neq B'$ or $Z \neq Z'$, and there exists $f \in \Iso(X)$ such that $f[Z]=Z'$, fix such $f$, and put $h_{B',Z'}=fh_{B,Z}f^{-1}$.
\end{proof}

\begin{lemma}
\label{le:one:1}
Let $X$ be an ultrametric space, let $Z \subseteq X$ be finite, and let $\mathcal{B}$ be an FD family such that $X$ has the $\mathcal{B}$-extremality property. There exists $h \in \tau(Id,\mathcal{B})$ such that
\begin{enumerate}
\item $d(x,h(x'))=d(g(x),hg'(x'))$,
\item $h(x) \in Z$ implies that $h(x)=x$
\end{enumerate}
 for all $x,x' \in Z$, and $g,g' \in \Iso(X)$ permuting $Z$ and $\BB$, and such that $\tau(g,\mathcal{B})= \tau(g',\mathcal{B})$.
 \end{lemma}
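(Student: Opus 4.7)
The plan is to adapt the inductive argument of Lemma \ref{le:one:0} to the $\BB$-extremality setting, using Lemma \ref{le:agree} to refine $\BB$ to an FD family respected by the relevant isometries, and Corollary \ref{co:one:1} to choose local isometries coherently.

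I would begin with two preparatory moves. First, for each $B \in \BB$ with $B \cap Z = \emptyset$, set $h|_B = \Id_B$; this is harmless because the isometries $g, g'$ in the hypothesis permute $\BB$ and $Z$, and hence send $\{B \in \BB : B \cap Z = \emptyset\}$ to itself, so the trivial action on those balls contributes nothing to conditions (1) and (2). Second, after enlarging $Z$ to a set $Z^+$ that intersects every element of $\BB$ and is closed under the (finite, modulo $\tau(\Id,\BB)$) action of isometries permuting $Z$ and $\BB$, apply Lemma \ref{le:agree} to obtain an FD family $\BB^*$ strengthening $\BB$, with the $\BB^*$-extremality property, such that every isometry of $X$ permuting $Z^+$ also permutes $\BB^*$. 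With this choice the hypothesis isometries $g, g'$ permute $\BB^*$, and any $h \in \tau(\Id, \BB^*) \subseteq \tau(\Id, \BB)$ automatically meets the required containment.

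I would then induct on $|Z|$ along the lines of Lemma \ref{le:one:0}. In the base case $|Z| \le 1$, take $h = \Id$. In the inductive step, an analog of Lemma \ref{le:one:aux} handles the case when some ball $B \supseteq Z$ has $|\mathcal{O}^{\BB^*}(B)| > 1$: the orbit is then infinite by $\BB^*$-extremality and Lemma \ref{le:max}, and we pick $h \in \tau(\Id, \BB^*)$ moving $B$ to a disjoint ball. Otherwise, set $r = \diam(Z)$ and let $\EE$ be the FD family of balls of radius $r$ meeting $Z$. For each $E \in \EE$, either $[E]$ is infinite --- in which case select $F_E \in [E]$ disjoint from $Z$, from the other elements of $\EE$, and from $\bigcup \BB^*$, and fix an $X$-extendable isometry $h_E: E \to F_E$ --- or $E$ inherits the extremality property and we invoke the inductive hypothesis inside $E$. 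Corollary \ref{co:one:1} makes these local choices coherent across orbits of similar balls, yielding, after gluing, a single $h \in \tau(\Id, \BB^*) \subseteq \tau(\Id, \BB)$.

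The main obstacle is verifying condition (1) under the hypothesis $\tau(g, \BB) = \tau(g', \BB)$, which need not entail $\tau(g, \BB^*) = \tau(g', \BB^*)$. I would handle this by splitting cases: cross-$\BB$-ball distances are determined by the pair of $\BB$-balls containing the two points, by (\ref{eq0}), and are preserved because $g, g'$ act identically on $\BB$; intra-$\BB$-ball distances are controlled by the recursive construction inside the ball combined with the coherence of Corollary \ref{co:one:1} applied to the orbit of $B$ under $g, g'$; and distances involving points sent outside $\bigcup \BB$ are pinned to the fixed value $r$ by the placement of the $F_E$'s, and hence are independent of how $g, g'$ act on $\BB^*$. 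Condition (2) then follows directly from the disjointness of the $F_E$'s from $Z$.
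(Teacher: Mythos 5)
There are two genuine gaps here, and together they would sink the argument as written. First, your preparatory reduction is not available. Lemma \ref{le:agree} needs a \emph{finite} set intersecting every element of $\BB$, and for its conclusion to apply to the hypothesis isometries you need $g$ and $g'$ to permute $Z^+$; but the closure of $Z$ together with one point per ball of $\BB$ under all isometries permuting $Z$ and $\BB$ is in general infinite (the orbit of a chosen point $x_B\in B$ under the stabilizer of $Z$ and $\BB$ can be infinite), and the parenthetical ``modulo $\tau(Id,\BB)$'' does not yield an actual finite set that $g$ and $g'$ permute. So you cannot assume $g,g'$ permute $\BB^*$, and the mismatch you correctly flag as the main obstacle --- $\tau(g,\BB)=\tau(g',\BB)$ but possibly $\tau(g,\BB^*)\neq\tau(g',\BB^*)$ --- is created by your own reduction and never resolved: a ball $E$ with $\mathcal{O}^{\BB^*}(E)=\{E\}$ may still have $g[E]\neq g'[E]$, since $g^{-1}g'$ lies in $\tau(Id,\BB)$ but not necessarily in $\tau(Id,\BB^*)$; then $d(g(x),hg'(x'))$ is a distance between two distinct balls while $d(x,h(x'))$ is computed inside $E$, and condition (1) fails.

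Second, the decomposition at radius $r=\diam(Z)$ with the dichotomy ``$[E]$ infinite versus $E$ has the extremality property'' is the wrong one in the relativized setting. A ball $E$ of radius $\diam(Z)$ may properly contain elements of $\BB$; then $[E]$ can be infinite while no $h\in\tau(Id,\BB)$ can move $E$ at all (any such $h$ fixes the contained ball of $\BB$, hence fixes $E$), so sending $E$ onto some $F_E\in[E]$ disjoint from $\bigcup\BB^*$ produces an $h$ that is \emph{not} in $\tau(Id,\BB)$. And when $E$ is not moved, it inherits only $\BB$-relativized extremality, not the plain extremality property, so neither Lemma \ref{le:one:0} nor Corollary \ref{co:one:1} applies inside $E$ in the form you invoke them. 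The paper's proof uses a different decomposition that avoids both problems: for each $x\in Z$ it takes $E_x$ to be the ball of $\BB$ containing $x$ if there is one, and otherwise the ball around $x$ of radius $\dist(x,\BB)$. This forces $\mathcal{O}^\BB(E)\subseteq[E]$, makes the correct dichotomy ``$\mathcal{O}^\BB(E)$ infinite or $=\{E\}$'' fall straight out of $\BB$-extremality, guarantees that the fixed balls genuinely have the unrelativized extremality property (so Corollary \ref{co:one:1} applies directly, with no induction on $|Z|$ and no refinement of $\BB$), and pins every cross-ball distance to a distance realized against $\BB$, which is precisely what allows the hypothesis $\tau(g,\BB)=\tau(g',\BB)$ to finish the verification of (1).
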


\begin{proof}
Fix $x \in Z$. If there exists $B \in \mathcal{B}$ such that  $x \in B$, then define $E_x=B$. Otherwise, let $E_x$ be the unique  ball such that $x \in E_x$, and radius of $E_x$ is equal to $\dist(x,\mathcal{B})$. Let $\EE$ be the collection of all the balls $E_x$, $x \in Z$, of radii $r_{E_x}$. Define $Z_E=Z \cap E$ for $E \in \EE$. 
Observe that $\EE$ is pairwise disjoint, and that $ \mathcal{O}^\mathcal{B}(E) \subseteq [E]$ for $E \in \EE $.

Define
\[ \FF=\{ E \in \EE: \mathcal{O}^\mathcal{B}(E) \mbox{ is infinite } \}. \]

Then for every $E \in \FF$ we can find $F_E \in [E]$ disjoint from every element of $\BB$, and such that 
\begin{equation}
\label{eq:le:1:1}
F_{E} \cap E'=\emptyset, \, F_E \cap F_{E''} = \emptyset
\end{equation}
for every $E' \in \EE$, $E'' \in \FF$ with $E \neq E''$. In particular,
\begin{equation}
\label{eq:le:1:2}
\dist(E, F_{E'})=\dist(F_E,F_{E''})=r_E, \mbox{ if } E',E'' \in [E] \mbox{ and }  E'' \neq E.
\end{equation}

Fix $E \in \EE$. If $E \in \FF$, then choose an $X$-extendable isometric bijection $h_E:E \rightarrow F_E$. Otherwise by the $\BB$-extremality of $X$, $\mathcal{O}^\mathcal{B}(E)=\{E\}$, so $E$ has the extremality property. Therefore we can apply Corollary \ref{co:one:1} to $E$ and $Z_E$ to find $h_E=h_{E,Z_E} \in \Iso(E)$. We check that the mapping $\bigcup_{E \in \EE} h_E$ can be extended to an isometry $h \in \tau(Id,\BB)$ exactly in the same way as we did it in the proof of Lemma \ref{le:one:0} for $\bigcup_{E \in \EE} h_E$ defined there.

We verify that Point (1) holds for $h$. Fix $x,x' \in Z$ and $E, E' \in \EE$ such that $x \in E$, $x' \in E'$.
Fix $g,g' \in \Iso(X)$ permuting $Z$ and $\BB$, and such that $\tau(g,\mathcal{B})=\tau(g',\mathcal{B})$. Fix $B,B' \in \mathcal{B}$ such that 
\[ \dist(E,B)=\dist(E, \BB), \,  \dist(E',B')=\dist(E',\BB). \]

\emph{Case 1:} $E=E'$, $E \in \FF$. Then $g[E] \in \FF$,  and $g[B]$ witnesses that $r_E=r_{g[E]}$. Also, $g'[E] \in [g[E]]$ because $g[B]=g'[B]$. By (\ref{eq:le:1:2}),
\[ d(x,h(x'))=\dist(E,h[E])=\dist(E,F_E)=r_E, \]
\[ d(g(x),hg'(x'))=\dist(g[E],hg'[E])=\dist(g[E],F_{g'[E]})=r_{g[E]}=r_E.    \]

\emph{Case 2:} $E=E'$, $E \not \in \FF$. Because $\mathcal{O}^\BB(E)=\{E\}$, we have 
\[ g[E]=g'[E], \,  g[Z_E]=g'[Z_E], \]
and 
\[ h_E=h_{E,Z_E},\,  h_{g[E]}=h_{g'[E]}=h_{g[E],g[Z_E]}.\]
Fix $f \in \Iso(X)$ such that 
\[ f[Z_E]=g[Z_E], \, h_{g[E]}=f h_{E} f^{-1}.\]
Because
\[ f^{-1}g[E]=f^{-1}g'[E]=E, \] 
and both $f^{-1}g$ and $f^{-1}g'$ permute $Z_E$, Lemma \ref{le:one} implies that
\[ d(g(x),hg'(x'))=d(g(x),h_{g[E]}g'(x'))=d(g(x),f h_{E} f^{-1} g'(x'))= \]
\[ d(f^{-1}g(x),h_{E} f^{-1} g'(x'))=d(x,h_{E}(x'))=d(x,h(x')).\]

%
%
%

%
%

\emph{Case 3:} $E \neq E'$, $\dist(E,B), \dist(E',B')< \dist(B,B')$. Then
\[ d(x,h(x'))=d(g(x),hg'(x'))=\dist(B,B')\]
by the assumption that $g$ and $g'$ permute $\BB$ in the same way, that is, $g[B]=g'[B]$, $g[B']=g'[B']$, and that $h$ setwise fixes $B,B'$.

\emph{Case 4:} $E \neq E'$, $\dist(B,B') \leq \dist(E,B)$ or $\dist(B,B') \leq \dist(E',B')$. Observe that in this case we can assume that $B=B'$. If $\dist(E,B)=\dist(E',B)$, then the radii of $E$ and $E'$, as well as $\dist(E,E')$, are all equal to $\dist(E,B)$, and this case can be dealt with analogously to Case 1. Otherwise, we can assume that $\dist(E,B)>\dist(E',B)$. Since $g'[B']=g[B']$, and $h$ setwise fixes $B,B'$, we have that
\[ d(x,h(x'))=d(g(x),hg'(x'))=\dist(E,B).\]

To verify Point (2), fix $x \in Z$, and $E \in \EE$ such that $x \in E$. If $E \in \FF$, then (\ref{eq:le:1:1}) implies that $h(x) \not \in Z$. Otherwise, $h(x) \in E$. Therefore if $h(x) \in Z_E$, then $h(x)=h_E(x)=x$ by Point (2) of Lemma \ref{le:one}.
\end{proof}

Now we are ready to prove the main technical lemma.

\begin{lemma}
\label{le:main}
Let $X$ be a countable ultrametric space, let $n \in \NN$, and let $\mathcal{A}$ be an FD family in $X$. 
There exist $\gamma_i \in \tau(Id,\mathcal{A})$, $i \leq n$, such that the following holds. If  $\mathcal{B} $ is an FD family such that $X$ has the $\mathcal{B}$-extremality property, then the orbit of $(\gamma_0, \ldots, \gamma_n)$ under the diagonal conjugation action of $\tau(Id,\BB)$ on $(\tau(Id,\mathcal{A}))^{n+1}$ is somewhere dense.

Moreover, if $X$ has the $\mathcal{A}$-extremality property, then the orbit of $(\gamma_0, \ldots, \gamma_n)$ under the action of $\tau(Id,\mathcal{A})$ is dense in $(\tau(Id,\mathcal{A}))^{n+1}$. 
\end{lemma}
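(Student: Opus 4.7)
The plan is to construct $(\gamma_0, \ldots, \gamma_n)$ by a diagonal back-and-forth enumerating countably many ``approximation tasks.'' Enumerate $X$, and enumerate pairs $(\BB^{(k)}, \bar h^{(k)})$, $k \in \NN$, where each $\BB^{(k)}$ is an FD family strengthening $\mathcal{A}$ and each $\bar h^{(k)}\in (\tau(\Id,\mathcal{A}))^{n+1}$ is a representative of a coset of $\tau(\Id,\BB^{(k)})^{n+1}$ in $\tau(\Id,\mathcal{A})^{n+1}$; countability of $X$ makes this list countable, since there are only countably many FD families and each determines only finitely many relevant coset classes. During the construction, maintain finite, $X$-extendable partial isometries $\gamma_i^{(s)}$ that induce the trivial permutation of $\mathcal{A}$. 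Back-and-forth steps (using Lemma~\ref{le:ext1} together with Lemma~\ref{le:ext0}) ensure each $x\in X$ eventually enters the domain and range of every $\gamma_i$. At a dedicated stage $s=k$, choose a fresh finite region $W_k\subseteq X$ disjoint from the currently defined finite domain, fix an $X$-extendable identification of $W_k$ with a finite set $Y_k$ containing a cross-section of $\BB^{(k)}$, and define $\gamma_i^{(s+1)}$ on $W_k$ to imitate $\bar h_i^{(k)}$ on $\BB^{(k)}$ via this identification.

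To verify the density ``moreover'' statement, fix any basic open set $\prod_i\tau(h_i,\BB')$ in $(\tau(\Id,\mathcal{A}))^{n+1}$ and pick the stage $k$ at which the task $(\BB',(h_0,\ldots,h_n))$ (in the coset sense) was realized. The construction supplies a local copy in $W_k$ of the $\BB'$-pattern of $(h_0,\ldots,h_n)$. Apply Lemma~\ref{le:one:1} (to the family $\mathcal{A}$, using the $\mathcal{A}$-extremality hypothesis) to extend the identification between $\BB'$ and its image in $W_k$ to an isometry $g\in\tau(\Id,\mathcal{A})$; a direct computation then gives $g\gamma_ig^{-1}(B)=h_i(B)$ for every $B\in\BB'$, so that the conjugate $(g\gamma_0g^{-1},\ldots,g\gamma_ng^{-1})$ lies in the prescribed neighborhood. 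For the first assertion, with a general FD family $\BB$ having the $\BB$-extremality property, restrict attention to open sets $\prod_i\tau(h_i,\BB')$ where $\BB'$ strengthens $\BB$, and apply Lemma~\ref{le:one:1} to $\BB$ to obtain the conjugator $g$ inside $\tau(\Id,\BB)$; this yields somewhere density.

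The main obstacle is carrying out the realization step consistently. One must embed each new task $(\BB^{(k)},\bar h^{(k)})$ in a fresh region of $X$ disjoint from the data built so far, with ultrametric distances compatible with the existing partial isometries and the identity permutation of $\mathcal{A}$, while guaranteeing that the enlarged partial isometries remain $X$-extendable. The weak Hrushovski property (Lemma~\ref{le:ext1}) supplies the required compatible extensions on arbitrary finite sets, Lemma~\ref{le:ext0} justifies piecing together locally defined maps into a single $X$-extendable isometry once orbits agree, and Lemma~\ref{le:one:1} together with Corollary~\ref{co:one:1} produces the fresh copies with precisely the distance behavior needed (relative to $\mathcal{A}$ or $\BB$, depending on which assertion is being proved). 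Interleaving these three ingredients with the back-and-forth bookkeeping so that all density demands and $X$-extendability are preserved simultaneously is where the technical heart of the argument lies.
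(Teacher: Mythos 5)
Your overall strategy --- enumerate countably many approximation tasks, realize each one on a ``fresh'' copy placed by an isometry with controlled distance behavior, and read off density at verification time --- is the same as the paper's. But there is a genuine gap in how you set up the tasks. You propose to realize, for \emph{every} FD family $\BB^{(k)}$ and \emph{every} coset representative $\bar h^{(k)}$, a copy of the $\BB^{(k)}$-pattern of $\bar h^{(k)}$ inside $\gamma_i$. This is impossible: once $\gamma_i$ is total, it induces a fixed permutation of each FD family it permutes, and conjugation by any $g$ fixing $\BB$ setwise transports that permutation; so no conjugate of $\gamma_i$ can enter $\tau(h_i,\BB')$ when $h_i$ induces an incompatible permutation of $\BB'$ (e.g.\ a different cycle type), and distinct tasks over the same family would impose contradictory demands on $\gamma_i$. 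The paper's construction is essentially \emph{conditional}: a task $(f_{0,k},\dots,f_{n,k},Z_k,\BB_k)$ is realized only when the $f_{i,k}$ already induce the same permutation of $\BB_k$ as the current finite approximations $\gamma_{i,k}$ (Condition (c)), and this is exactly why the conclusion for general $\BB$ is only ``somewhere dense'' --- namely dense in $\prod_i\tau(\gamma_i,\BB')$ --- rather than dense. Your verification paragraph never confronts this compatibility constraint. Relatedly, you omit the step corresponding to Lemma~\ref{le:agree}: for an arbitrary $\BB$ with the $\BB$-extremality property you must first produce a strengthening $\BB'$, still with the extremality property, that the $\gamma_i$ actually permute (this comes from the $\gamma_i$ permuting some $X_k$ meeting every element of $\BB$); without it you cannot even name the nonempty open set in which the orbit is dense.

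A second, more technical problem is the picture of the realization step as placing the copy in a region ``disjoint from the currently defined finite domain.'' When $\mathcal{O}^{\BB}(C)$ is a singleton for some relevant ball $C$, the copy of the data inside $C$ cannot be moved off $C$ at all; one must instead recurse inside $C$, which is precisely what the inductive structure of Lemma~\ref{le:one:0} (the dichotomy $E\in\FF$ versus $E\notin\FF$, with Corollary~\ref{co:one:1} handling the singleton case coherently across conjugates) is for. Moreover, the conjugator $h$ must be chosen \emph{at construction time}, before the new values of $\gamma_{i,k+1}$ are defined, because the union $\gamma'_{i,k+1}\cup h f'_{i,k}h^{-1}$ has to be verified isometric using the distance identities $d(x,h(x'))=d(g(x),hg'(x'))$ of Lemma~\ref{le:one:1}; deferring the choice of $g$ to verification time, as you do, leaves no mechanism to guarantee that the imitation glued into $W_k$ is consistent with the part of $\gamma_i$ already built. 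You correctly identify this as ``where the technical heart of the argument lies,'' but it is exactly the part that must be carried out for the proof to stand, and the disjoint-fresh-region framing would have to be replaced by the extremality-driven case analysis to do so. (A minor further slip: an FD family can determine countably infinitely many, not finitely many, cosets of $\tau(\Id,\BB)$ in $\tau(\Id,\mathcal{A})$; this does not hurt the enumeration but signals that the coset bookkeeping was not checked.)
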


\begin{proof}
If there is no FD family $\BB$ such that $X$ has the $\BB$-extremality property, there is nothing to prove. Otherwise, let $(f_{0,k}, \ldots, f_{n,k}, Z_k, \mathcal{B}_k)$, $k \in \NN$, be a list such that every tuple $(f_0, \ldots, f_n,Z,\BB)$ satisfying
\begin{enumerate}[(1)]
\item $Z \subseteq X$ is finite,
\item each $f_i$ is an $X$-extendable mapping defined on $Z$,
\item  $\BB$ is an FD family such that $X$ has the $\BB$-extremality property
\end{enumerate} 
appears on it infinitely many times. 

Let $X=\{x_k\}_{k \in \NN}$. We define finite $X_k \subseteq X$, $\gamma_{i,k} \in \EIso(X_k)$, $i \leq n$, and $h_k \in \tau(Id,\BB_k)$  such that for every $k \in \NN$ and $i \leq n$
\begin{enumerate}[(a)]
\item $x_k \in X_k$, $X_k \subseteq X_{k+1}$;
\item $\gamma_{i,k} \subseteq \gamma_{i,k+1}$;
\item if $Z_k \subseteq X_{k}$, $f_{i,k}[Z_k] \subseteq X_k$, and $f_{i,k}$, $\gamma_{i,k}$ induce the same permutation of $\BB_{k}$ for $i \leq n$, then 
\[ h^{-1}_{k+1} \gamma_{i,k+1}  h_{k+1}(x)=f_{i,k}(x) \mbox{ for } i \leq n \mbox{ and } x \in Z_k.\]
\end{enumerate}

Fix a finite $X_0 \subseteq X$ such that $x_0 \in X_0$, and $X_0$ has non-empty intersection with each element of $\mathcal{A}$. Define $\gamma_{i,0}=Id \upharpoonright X_0$ for $i \leq n$, and $h_0=Id$. 

Suppose that $X_k$, $\gamma_{i,k}$, and $h_k$ have been constructed for some $k$. We show how to construct  $X_{k+1}$, $\gamma_{i,k+1}$, and $h_{k+1}$.

If  the assumptions of Condition (c) are not satisfied, then, applying Lemma \ref{le:ext1}, find a finite $X_{k+1} \subseteq X$ such that $X_k \cup \{x_{k+1}\} \subseteq X_{k+1}$, and every $X$-extendable partial isometry of $X_k \cup \{x_{k+1}\}$ can be extended to an $X$-extendable isometry of $X_{k+1}$. In particular, each $\gamma_{i,k}$ can be extended to some $\gamma_{i,k+1} \in \EIso(X_{k+1})$. Also, put $h_{k+1}=Id$. Clearly, Conditions (a) and (b) hold.
 

Otherwise, applying Lemma \ref{le:ext1} find a finite $X'_{k+1} \subseteq X$ such that $X_k \cup \{x_{k+1}\} \subseteq X'_{k+1}$, and every $X$-extendable partial isometry of $X_k \cup \{x_{k+1}\}$ can be extended to an $X$-extendable isometry of $X'_{k+1}$. In particular, each $\gamma_{i,k}$ can be extended to some $\gamma'_{i,k+1} \in \EIso(X'_{k+1})$, and each $f_{i,k}$ can be extended to some $f'_{i,k} \in \EIso(X'_{k+1})$.

Now  find $h \in \tau(Id,\BB)$ applying Lemma \ref{le:one:1} to $\BB=\BB_k$ and $Z=X'_{k+1}$.
We show that each $\gamma'_{i,k+1} \cup h f'_{i,k} h^{-1}$ is an $X$-extendable isometry of $X'_{k+1} \cup h[X'_{k+1}]$. Since  $\gamma'_{i,k+1} \in \EIso(X'_{k+1})$, and $h f'_{i,k} h^{-1} \in \EIso(h[X'_{k+1}])$, Lemma \ref{le:ext0} implies that it suffices to show that the union of these mappings is isometric.

First of all, observe that it is well defined. Suppose that $h(x) \in X'_{k+1}$ for some $x \in X'_{k+1}$.  Because both $\gamma'_{i,k+1}$ and $f'_{i,k}$ are $X$-extendable (so we can regard them as isometries of $X$), they permute $X'_{k+1}$, $\BB_k$,  and they induce the same permutation of $\BB_k$ for each $i \leq n$ (see Point (c)), we can apply Lemma \ref{le:one:1}. By Point (2) of this lemma, $h(x)=x$, so
\[ d(\gamma'_{i,k+1}(h(x)), h f'_{i,k} h^{-1}(h(x)))=d(\gamma'_{i,k+1}(x),h f'_{i,k}(x))=d(x,x), \]
where the last equality follows from Point (1) of Lemma \ref{le:one:1}. In other words,
\[ \gamma'_{i,k+1}(h(x))=h f'_{i,k} h^{-1}(h(x)). \]

In order to see that it is a distance-preserving mapping, fix $x,x' \in X'_{k+1}$. Then $h(x') \in h[X'_{k+1}]$, and 
\[ d(\gamma'_{i,k+1}(x),h f'_{i,k} h^{-1}(h(x')))=d(\gamma'_{i,k+1}(x),h f'_{i,k}(x'))=d(x,h(x')), \]
where the last equality follows from Point (1) of Lemma \ref{le:one:1}.

Applying Lemma  \ref{le:ext1} again, find a finite  $X_{k+1} \subseteq X$ such that $X'_{k+1} \cup h[X'_{k+1}] \subseteq X_{k+1}$, and each $X$-extendable partial isometry of $X'_{k+1}\cup h[X'_{k+1}]$ can be extended to an $X$-extendable isometry of $X_{k+1}$. In particular, each $\gamma'_{i,k+1} \cup h f'_{i,k} h^{-1}$ can be extended to some  $\gamma_{i,k+1} \in \EIso(X_{k+1})$. Put $h_{k+1}=h$. Then for every $i \leq n$ and $x \in Z_k$
\[ h^{-1}_{k+1} \gamma_{i,k+1} h_{k+1}(x)=h^{-1}_{k+1} h_{k+1} f'_{i,k} h^{-1}_{k+1} h_{k+1}(x)=f'_{i,k}(x)=f_{i,k}(x), \]
so Condition (c) holds.

This completes the inductive construction. Put $\gamma_i= \bigcup_k \gamma_{i,k}$ for $i \leq n$. By Conditions (a) and (b), each $\gamma_i$ is an isometry of $X$, and the step $k=0$ of the construction guarantees that $\gamma_i \in \tau(Id, \mathcal{A})$.

We will show that $\gamma_i$ are as required. Fix an FD family $\BB$ such that $X$ has the $\BB$-extremality property, and fix $k$ such that $X_k$ intersects every element of $\BB$. By Lemma \ref{le:agree}, there exists an FD family $\BB'$ such that $X$ has the $\BB'$-extremality property,  $\BB'$ strengthens $\BB$, and each $\gamma_i$ permutes $\BB'$ (because each $\gamma_i$ permutes $X_k$.)

Fix $g_i \in \tau(\gamma_i, \BB')$ for $i \leq n$. Obviously, each $g_i$ induces the same permutation of $\BB'$ as $\gamma_i$ does. We will show that for every finite $Z \subseteq X$ intersecting each element of $\BB'$ there exists $h \in \tau(Id,\BB')$ such that
\[ h^{-1} \gamma_i  h(x)=g_i(x) \]
for $x \in Z$, and $i \leq n$. As $g_i$ and $Z$ are arbitrary, this implies that the orbit of $(\gamma_0, \ldots, \gamma_n)$ under the action of $\tau(Id,\BB')$ is dense in $\tau(\gamma_0, \BB') \times \ldots \times \tau(\gamma_n, \BB')$.


Fix such $Z \subseteq X$, and let $f_i=g_i \upharpoonright Z$, $i \leq n$. Since there exist infinitely many $k$ with $(f_{0,k}, \ldots,f_{n,k},Z_k, \mathcal{B}_k)=(f_0, \ldots,f_n,Z,\BB')$, we can fix such $k$ so that, additionally, $Z, f_i[Z] \subseteq X_k$ for $i \leq n$. Because $Z$ intersects every element of $\BB'$, $f_i$, $\gamma_{i,k}$ induce the same permutation of $\BB'$ for $i \leq n$. 
Therefore Condition (c) yields that
\[ h^{-1}_{k+1} \gamma_i h_{k+1}(x)= h^{-1}_{k+1} \gamma_{i,k+1} h_{k+1}(x)=f_{i}(x)=g_i(x) \]
%
%
 %
for $x \in Z$, and $i \leq n$.

The `moreover' part follows from the fact that $\tau(\gamma_i,\mathcal{A})=\tau(Id,\mathcal{A})$ for $i \leq n$, and thus the above argument works for $\BB'=\mathcal{A}$, provided that $X$ has the $\mathcal{A}$-extremality property.
\end{proof}
 
\begin{theorem}
\label{th:2Con}
Let $X$ be an ultrametric space. Suppose that $\BB$ is an FD family that cannot be strengthened to an FD family $\BB'$ such that $X$ has the $\BB'$-extremality property. Then one of the following  holds:

\begin{enumerate}
\item there exists an FD family $\BB'$ strengthening $\BB$, and such that there are infinitely many balls, which are r-maximal $\BB'$-non-extremal in $X$;
\item there exists an infinite sequence $C_0 \supsetneq C_1 \supsetneq \ldots$ of   balls such that $C_{k+1}$ is r-maximal non-extremal in $C_k$, and $C_0$ is r-maximal $\BB$-non-extremal in $X$.
\end{enumerate}
\end{theorem}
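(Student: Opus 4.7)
The plan is to prove the contrapositive: assume the negation of $(1)$, and construct the chain in $(2)$. The negation of $(1)$, combined with the theorem's hypothesis, says that for every FD family $\BB'$ strengthening $\BB$, the set $\mathcal{R}(\BB')$ of balls that are r-maximal $\BB'$-non-extremal in $X$ is both non-empty and finite.

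The main task is a recursive construction of FD families $\BB = \BB_0 \subseteq \BB_1 \subseteq \cdots$, each strengthening $\BB$, together with balls $C_0 \supsetneq C_1 \supsetneq \cdots$, maintaining the invariants that $C_k \in \mathcal{R}(\BB_k)$, each $C_j$ with $j < k$ has singleton $\BB_k$-orbit, no ball of $\BB_k$ lies strictly inside $C_k$, and $C_{j+1}$ is intrinsically r-maximal non-extremal in $C_j$. The base case is immediate: take $\BB_0 = \BB$ and $C_0 \in \mathcal{R}(\BB)$; no $\BB$-ball lies strictly inside $C_0$, since such a ball would force $\mathcal{O}^\BB(C_0) = \{C_0\}$, contradicting the non-extremality of $C_0$.

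For the inductive step from $k$ to $k+1$, first pin $C_k$ to a singleton orbit. Using Lemma \ref{le:max}(3), pass to a strengthening $\widetilde{\BB}$ of $\BB_k$ under which $\tau(\Id, \widetilde{\BB})$ acts as the full symmetric group on $[C_k]$, and augment $\widetilde{\BB}$ with the balls of $[C_k] \setminus \{C_k\}$. One must justify that $[C_k]$ is finite here: an infinite $[C_k]$ carrying non-trivial orbit structure would, under mild strengthening, produce infinitely many r-maximal non-extremal balls within $[C_k]$, violating the negation of $(1)$. All newly added balls lie outside $C_k$, so the property that no ball of the family is strictly inside $C_k$ is preserved. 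Next, choose $C_{k+1}$ to be an element of $\mathcal{R}(\widetilde{\BB})$ lying strictly inside $C_k$. Its intrinsic r-maximal non-extremality in $C_k$ follows because intrinsic isometries of $C_k$ lift, via extension by the identity outside $C_k$, to elements of $\tau(\Id, \widetilde{\BB})$; hence the $\widetilde{\BB}$-orbit of any ball $D \subseteq C_k$ coincides with its intrinsic orbit in $C_k$, and the $\widetilde{\BB}$-extremality of balls $D$ with $C_{k+1} \subsetneq D \subseteq C_k$ translates directly to intrinsic extremality in $C_k$.

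The main obstacle is to establish that such a $C_{k+1}$ always exists inside $C_k$. If no element of $\mathcal{R}(\widetilde{\BB})$ lies inside $C_k$, then all elements are disjoint from $C_k$, since they can neither equal $C_k$ (which is pinned) nor contain $C_k$. I would then iterate the pinning procedure on each outside element of $\mathcal{R}(\widetilde{\BB})$, producing further strengthenings $\widetilde{\BB}^{(0)} \subseteq \widetilde{\BB}^{(1)} \subseteq \cdots$, and argue that the process cannot continue indefinitely while avoiding inside-$C_k$ non-extremality: otherwise one would, in the limit, exhibit a strengthening $\BB^*$ of $\BB$ under which $X$ has the $\BB^*$-extremality property, contradicting the theorem's hypothesis. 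Carrying out this termination argument cleanly---producing $\BB^*$ as a genuine FD strengthening from the iterated pinning and rigorously certifying that every ball of $X$ becomes $\BB^*$-extremal---is the most delicate step and the principal obstacle of the proof.
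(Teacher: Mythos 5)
Your reduction to the negation of Point (1) and the ``pinning'' idea (adjoining $[C_k]\setminus\{C_k\}$ to the family so that intrinsic orbits inside $C_k$ coincide with $\widetilde{\BB}$-orbits, using that an intrinsic isometry of a ball extends by the identity to an isometry of $X$) are sound, and they are genuine ingredients of the paper's argument. The gap is exactly the step you flag as the ``principal obstacle'', and the workaround you sketch does not close it. After pinning, $C_k$ itself becomes $\widetilde{\BB}$-extremal (its orbit is a singleton), and nothing in the hypotheses forces any ball strictly inside $C_k$ to be non-extremal: $C_k$ may perfectly well have the intrinsic extremality property, in which case your chain dead-ends at $C_k$ even though non-extremal balls persist elsewhere in $X$. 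The hypothesis only says that no finite strengthening kills all non-extremality \emph{globally}; it does not localize the surviving non-extremality to the particular ball you committed to at stage $k$. Your fallback --- iterate pinning on the outside elements and pass ``in the limit'' to a family $\BB^*$ witnessing extremality --- fails on two counts: FD families are finite, so an infinite iteration has no FD limit to contradict the hypothesis with; and even granting some limit object, an infinite run of the process in which all non-extremality stays outside $C_k$ is entirely consistent with the hypothesis, so no contradiction is available. (Your side claim that $[C_k]$ is finite is correct, but for the reason given by Points (1) and (2) of Lemma \ref{le:max}: a non-extremal ball contains no element of the family, so all but finitely many members of $[C_k]$ lie in the finite orbit $\mathcal{O}^{\BB_k}(C_k)$.)

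The paper avoids the dead-end problem by not committing to a single chain. It builds, level by level, the collection $\mathcal{C}_{k+1}$ of \emph{all} r-maximal $\BB_k$-non-extremal balls, where $\BB_k=\BB\cup\mathcal{C}_k$; this collection is finite because Point (1) fails and non-empty because the main hypothesis together with Point (1) of Lemma \ref{le:max} guarantees an r-maximal non-extremal ball at every stage. Organizing these balls as a finitely branching tree (the children of $C_\sigma$ being the r-maximal non-extremal balls it contains) and applying K\"onig's lemma extracts an infinite branch, which is the chain required by Point (2). Some such compactness or backtracking device is unavoidable here; without it the greedy single-chain construction you describe cannot be completed.
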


\begin{proof}
Suppose that Point (1) does not hold. For every $k \in \NN$ we will define a family $T_k \subseteq \NN^{k+1}$ of finite sequences of natural numbers of length $k+1$, and a family of balls $\mathcal{C}_k$, indexed by elements of $T_k$, so that for all $k$ and $\sigma \in T_k$

\begin{enumerate}[(a)]
\item $\mathcal{C}_0$ is the family of all balls, which are r-maximal $\BB$-non-extremal in $X$,
\item $T_k$ is finite and non-empty,
\item  if $k>0$, and $\sigma=\sigma'^\frown i$, then $\sigma' \in T_{k-1}$,
\item $C \subseteq C_\sigma$ is r-maximal non-extremal in $C_\sigma$  iff $C=C_{\sigma^\frown i}$ for some $\sigma^\frown i \in T_{k+1}$.
\item for $\BB_k=\BB \cup \mathcal{C}_k$, every r-maximal $\BB_k$-non-extremal ball $C$ in $X$ is contained in $C_\sigma$ for some $\sigma \in T_k$, and $C$ is r-maximal non-extremal in this $C_\sigma$.
\end{enumerate}

By our assumption there are only finitely many r-maximal $\BB$-non-extremal  balls $C_{(0)}, \ldots C_{(n)}$ in $X$. Put $T_0=\{ (0), \ldots, (n) \}$, $\mathcal{C}_0=\{C_{(0)}, \ldots C_{(n)}\}$, and suppose that $T_l$, and $\mathcal{C}_l$ have been defined for all $l \leq k$. We will construct $T_{k+1}$, $\mathcal{C}_{k+1}$.


Fix $\sigma \in T_k$, and let $\mathcal{C}_\sigma$ be the collection of all balls which are contained in $C_\sigma$, and are r-maximal $\BB_k$-non-extremal in $X$. We can assume that $\mathcal{C}_\sigma$ is indexed by elements of the form $\sigma^\frown n$, where $n$ is a natural number. Let $T_\sigma$ be the collection of these indices. Finally, let $T_{k+1}= \bigcup_{\sigma \in T_k} T_\sigma $, $\mathcal{C}_{k+1}= \bigcup_{\sigma \in T_k} \mathcal{C}_\sigma $.

Clearly, Conditions (c) and (d) are satisfied. To see that Condition (e) holds as well, notice that if $C$ is r-maximal $\BB_{k+1}$-non-extremal, then either it is strictly contained in an element of some $\mathcal{C}_\sigma$ or, by Point (1) of Lemma \ref{le:max}, it is also r-maximal $\BB_k$-non-extremal. If the former is true, then $C$ is r-maximal non-extremal in $C_\sigma$. But by the inductive assumption, $\mathcal{C}_{k+1}$ is the collection of all r-maximal $\BB_k$-non-extremal balls, so the latter is not possible.

Because $X$ does not have the $\BB_k$-extremality property, Lemma \ref{le:max} implies that there exists an r-maximal $\BB_k$-non-extremal ball in $X$. By Point (e), it is a member of $\mathcal{C}_{k+1}$. Because Point (1) does not hold, each $\mathcal{C}_\sigma$ is finite. Therefore Point (b) holds for $T_{k+1}$.

Put $T=\bigcup_k T_k$. By Condition (c), $T$ is a  tree. By Condition (b), $T$ is infinite, and finitely branching, so K\"{o}nig's lemma implies that there exists an infinite branch $S$ in $T$. For $k \in \NN$ put $C_k=C_\sigma$, where $\sigma$ is the unique sequence in $S$ of length $k+1$. By Conditions (a) and (d), the sequence $C_0,C_1, \ldots$ witnesses that Point (2) holds.
 \end{proof}

\begin{theorem}
\label{th:main:2}
Let $X$ be a Polish ultrametric space. Suppose that there exists an FD family $\BB$ that cannot be strengthened to an FD family $\BB'$ such that $X$ has the $\BB'$-extremality property. Then no open subgroup $V \leq \Iso(X)$ has a comeager conjugacy class.
\end{theorem}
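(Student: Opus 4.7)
The plan is to apply Lemma~\ref{le:fo}(2) to the conjugation action of $V$ on itself, using a continuous $V$-conjugation-invariant that takes multiple values in every basic open set. Since $V$ is open, $V\supseteq\tau(\Id,\mathcal{A}_0)$ for some FD family $\mathcal{A}_0$; by replacing $\mathcal{A}_0$ with the set of minimal balls of $\mathcal{A}_0\cup\BB$ under inclusion (an FD family strengthening both, since fixing a ball setwise forces any ball containing it to be fixed setwise), I may assume $\mathcal{A}_0$ strengthens $\BB$. Any further strengthening of $\mathcal{A}_0$ then also strengthens $\BB$, so by hypothesis no such strengthening has the extremality property, and Theorem~\ref{th:2Con} applies: either (Case 1) some FD family $\BB'$ strengthening $\mathcal{A}_0$ admits infinitely many $r$-maximal $\BB'$-non-extremal balls $D_1,D_2,\ldots$, or (Case 2) there is an infinite descending chain $C_0\supsetneq C_1\supsetneq\cdots$ of $r$-maximal non-extremal balls with $C_0$ $r$-maximal $\mathcal{A}_0$-non-extremal in $X$.

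The heart of the argument is to show that no conjugacy class of $H:=\tau(\Id,\BB')$ (respectively $\tau(\Id,\mathcal{A}_0)$ in Case~2) is comeager in any non-empty open subset of $H$. In Case~1, Lemma~\ref{le:max}(1) gives that each $\mathcal{O}^{\BB'}(D_i)\subseteq[D_i]$ is finite of size $n_i\geq 2$, and the orbit representation $\pi_i\colon H\to\Sym(\mathcal{O}^{\BB'}(D_i))$ is continuous and transitive; hence the cycle type of $g\in H$ on $\mathcal{O}^{\BB'}(D_i)$ is an $H$-conjugation invariant. Given a basic open $U_0=\tau(g_0,\BB'')\subseteq H$ with $\BB''$ strengthening $\BB'$, the infinitely many distinct orbits $\mathcal{O}^{\BB'}(D_i)$ (distinct because each orbit is finite while the $D_i$ are infinite, so infinitely many $D_i$ cannot fit into finitely many finite orbits) allow me to find some $i$ on which $\tau(\Id,\BB'')$ still acts non-trivially: the counting uses that a ball $B\in\BB''$ is contained in at most one ball of any prescribed radius, so $\BB''$ can ``block'' at most countably many $\tau(\Id,\BB')$-orbits, and among those unblocked the action of $\tau(\Id,\BB'')$ on the remaining (non-fixed) elements is a full symmetric group by Lemma~\ref{le:max}(3) applied to the subfamily. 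Thus $\pi_i(U_0)=\pi_i(g_0)\pi_i(\tau(\Id,\BB''))$ contains at least two elements with distinct cycle types; since cycle-type preimages in $H$ are clopen and proper clopen subsets of non-empty open subsets of a Baire space are never comeager, no $H$-conjugacy class can be comeager in $U_0$. Case~2 is handled analogously by iterating Lemma~\ref{le:max}(3) inside the chain: the stabilizer of $C_0,\ldots,C_{k-1}$ in $\tau(\Id,\mathcal{A}_0)$ acts transitively on a finite set of size $\geq 2$ within $C_{k-1}$, and by descending to $k$ past every ball of $\BB''$ the action is unconstrained, producing multiple cycle types in $U_0$.

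Finally, to lift from $H$ to $V$: a comeager $V$-conjugacy class $C$ would give $C\cap H$ comeager in the open subgroup $H$; since $V/H$ is countable, $C\cap H$ is a countable union of $H$-conjugacy classes, so by Baire one of them, say $C_0$, is non-meager in $H$, and a standard fact (cf.\ the argument in the proof of Lemma~\ref{le:fo}(1)) shows that a non-meager orbit of a continuous Polish group action is comeager in some non-empty open subset of $H$, contradicting the preceding paragraph. The main obstacle is the combinatorial bookkeeping in Case~1, in particular ruling out the pathological scenario where $\BB''$ trivializes the $\tau(\Id,\BB'')$-action on every orbit $\mathcal{O}^{\BB'}(D_i)$ simultaneously; this requires carefully counting at the level of orbits (rather than of $[D_i]$-equivalence classes or of the $D_i$ themselves) and extracting from the infinitely many distinct orbits one whose non-fixed part is large enough to support a symmetric-group action, and the analogous bookkeeping must be done for Case~2 inside the chain.
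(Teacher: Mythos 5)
Your overall strategy is the same as the paper's: reduce to $\tau(\Id,\BB)\leq V$, invoke Theorem~\ref{th:2Con} to split into the ``infinitely many r-maximal non-extremal balls'' case and the ``infinite descending chain'' case, use the parity/cycle type of the finite permutation induced on an orbit of balls as a continuous conjugation invariant taking at least two values on every basic open set, and finish by Baire category. However, the two places you yourself flag as ``the main obstacle'' are genuine gaps, and they are exactly where the work lies.

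First, in Case 1 your counting does not close. You claim $\BB''$ can ``block'' at most countably many of the orbits $\mathcal{O}^{\BB'}(D_i)$; since there are only countably many such orbits in total, this bound is vacuous. The fact you need, and which you never isolate, is that \emph{distinct r-maximal $\BB'$-non-extremal balls are pairwise disjoint}: if $D\subsetneq D'$ were both r-maximal $\BB'$-non-extremal, then $D'$ has strictly larger radius and so is $\BB'$-extremal by the r-maximality of $D$, a contradiction. Since every element of every $\mathcal{O}^{\BB'}(D_i)$ is again an r-maximal $\BB'$-non-extremal ball, all balls occurring across all these orbits are pairwise disjoint, so each member of the \emph{finite} family $\BB''$ is contained in at most one of them and only finitely many orbits are blocked. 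Without this observation your ``prescribed radius'' argument leaves open the possibility that a single ball of $\BB''$ sits inside nested balls from infinitely many different orbits, which is precisely the pathological scenario you admit you cannot rule out.

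Second, Case 2 is not ``analogous'' in the way you describe. The assertion that, after descending past $\BB''$, ``the action is unconstrained'' on $\mathcal{C}_k=\mathcal{O}^{\mathcal{A}_0}(C_k)$ is false: these orbits are nested (each element of $\mathcal{C}_{k+1}$ lies inside an element of $\mathcal{C}_k$), so the induced permutation group on $\mathcal{C}_k$ is a wreath-product-like group respecting the tree structure, not a full symmetric group, and Lemma~\ref{le:max}(3) only yields a full symmetric group on a single class $[F]$ inside one ball of the previous level. The paper's proof therefore splits into two sub-cases according to whether $g$ already moves some element of some $\mathcal{C}_{k}$, and in each it manufactures the second parity value by composing $g$ with an explicit involution swapping two elements of $[F]$ that is the identity outside $F\cup F'$, where $F$ is chosen deep enough that the ball containing $[F]$ contains no element of $\BB''$ (so the involution stays in $\tau(\Id,\BB'')$) yet is moved or fixed by $g$ as needed. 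Your sketch does not supply this, and the transitivity statement you appeal to does not by itself produce two distinct cycle types inside $\tau(g,\BB'')$. The final lifting step from $H$ to $V$ via countable index and Baire category is correct, though the paper gets the same conclusion more directly from Lemma~\ref{le:fo}(2) once each $H$-class is shown nowhere dense.
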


\begin{proof}
Fix an open $V \leq \Iso(X)$. Fix an FD family $\BB$ that cannot be strengthened to an FD family $\BB'$ such that $X$ has the $\BB'$-extremality property. Strengthening $\BB$, if necessary, we may assume that $\tau(Id, \BB) \leq V$.
By Theorem \ref{th:2Con}, it suffices to consider two cases.

\emph{Case 1:} Point (1) of Theorem \ref{th:2Con} holds. Let $\BB'$ be an FD family strengthening $\BB$, and such that there exists an infinite family $\mathcal{C}$ of balls that are r-maximal $\BB'$-non-extremal in $X$. Set $W=\tau(Id,\BB')$. We will show that for every $\gamma \in W$ the orbit of $\gamma$ under the conjugation action of $W$ on itself is nowhere dense. By Point (2) of Lemma \ref{le:fo}, this proves that the conjugation action of $V$ on itself has no comeager orbit.


Let $\mathcal{C}_k$, $k \in \NN$, be an enumeration of all $\mathcal{O}^{\BB'}(C)$, $C \in \mathcal{C}$. 
Fix $\gamma \in W$. 
Fix $g \in W$, and an FD family $\BB''$ which strengthens $\BB'$. As $\BB''$ is finite, while $\mathcal{C}$ is infinite and pairwise disjoint, there exists $k_0$ such that no element of $\BB''$ is contained in some element of $\mathcal{C}_{k_0}$. For $g \in W$ denote by $\phi_g$ the permutation of $\mathcal{C}_{k_0}$ induced by $g $. By Point (3) of Lemma \ref{le:max}, every permutation of $\mathcal{C}_{k_0}$ is induced by some $g' \in \tau(g,\BB'')$. As $\mathcal{C}_{k_0}$ is finite and non-trivial, there exists $g' \in \tau(g,\BB'')$ such that the signature of $\phi_{g'}$ differs from the signature of $\phi_{\gamma}$. Because the signature of $\phi_{h\gamma h^{-1}}$ is the same as the signature of $\phi_\gamma$ for every $h \in W$, we get that $h\gamma h^{-1} \not \in \tau(g',\BB'' \cup \mathcal{C}_{k_0})$ for every $h \in W$. Since $g$ and $\BB''$ are arbitrary, this completes the proof of Case 1. 

 \emph{Case 2:} Point (2) of Theorem \ref{th:2Con} holds. Set $W=\tau(Id, \BB)$. As in Case 1, we will show that for every $\gamma \in W$, the orbit of $\gamma$ under the conjugation action of $W$ on itself is nowhere dense.

Let $C_0 \supsetneq C_1 \supsetneq \ldots$ be an infinite sequence of balls such that each $C_{k+1}$ is r-maximal non-extremal in $C_k$, and $C_0$ is r-maximal $\BB$-non-extremal in $X$. Put $\mathcal{C}_k=\mathcal{O}^{\BB}(C_k)$. Obviously, every $\mathcal{C}_k$ is finite and non-trivial, and every $g \in W$ permutes $\mathcal{C}_k$. 
Fix $\gamma \in W$. Fix $g \in W$, and an FD family $\BB'$.

\emph{Case 2a:} There exists $k_0$ such that $g$ induces a non-trivial permutation of $\mathcal{C}_{k_0}$. Let $D \in \mathcal{C}_{k_0}$ be such that $g[D]\neq D$. As $\BB'$ is finite, and $\left| \mathcal{C}_k \right| \rightarrow \infty$, there exists $k_1>k_0$ and $E \in \mathcal{C}_{k_1}$ such that $E \subseteq D$, and no element of $\BB'$ is contained in $E$. Fix $F \in \mathcal{C}_{k_1+1}$ such that $F \subseteq E$. Then $[F] \subseteq \mathcal{C}_{k_1+1}$, and $[F]$ is non-trivial.  For $g \in W$ denote by $\phi_g$ the permutation of $\mathcal{C}_{k_1+1}$ induced by $g $. 

If the signature of $\phi_\gamma$ differs from the signature of $\phi_g$, then the same argument as in Case 1 completes the proof. Otherwise fix $F' \in [F]$ which is distinct from $F$, and an involution $f \in \Iso(X)$ such that $f[F]=F'$ and $f$ is the identity outside of $F\cup F'$. It is straightforward to check that $gf \in \tau(g,\BB')$. 
Since the signature of $\phi_{gf}$ differs from the signature of $\phi_{g}$, we get that $h\gamma h^{-1} \not \in \tau(gf,\BB' \cup \mathcal{C}_{k_1+1})$ for every $h \in W$.
As $g$ and $\BB'$ are arbitrary, this finishes the proof of Case 2a. 

\emph{Case 2b:} The action of $g$ on every $\mathcal{C}_k$ is trivial. If there exists $k_0$ such that the action of $\gamma$ on $\mathcal{C}_{k_0}$ is not trivial, then the same is true about $h \gamma h^{-1}$ for every $h \in W$. Therefore $h \gamma h^{-1} \not \in \tau(g,\BB' \cup \mathcal{C}_{k_0})$ for every $h \in W$. Otherwise, find $k_0$ and $D \in \mathcal{C}_{k_0}$ such that no element of $\BB'$ is contained in $D$, and fix $E \in \mathcal{C}_{k_0+1}$ such that $E \subseteq D$. As every permutation of $[E]$ is induced by some element of $\tau(g, \BB' \cup \mathcal{C}_{k_0})$, and $[E]$ is non-trivial, there exists $g' \in \tau(g, \BB' \cup \mathcal{C}_{k_0})$ which induces a non-trivial permutation of $\mathcal{C}_{k_0+1}$. Therefore $h\gamma h^{-1} \not \in \tau(g',\BB' \cup \mathcal{C}_{k_0+1})$ for every $h \in W$.
\end{proof}

Note that if $X' \subseteq X$ is a dense subspace of an ultrametric space $X$, then every ball $B$ in $X$ determines a unique (as a set) ball $B \cap X'$ in $X'$, and every ball $B'$ in $X'$ determines a unique ball $\overline{B'}$ in $X$. Therefore in the sequel we will not differentiate between balls in $X$ and balls in $X'$.

\begin{lemma}
\label{le:Count}
Let $X$ be a Polish ultrametric space. There exists a countable, dense subspace $X' \subseteq X$ such that for every $g \in \Iso(X)$, and every FD family $\BB$ in $X$ there exists $g' \in \Iso(X)$ such that $g'[X']=X'$, and $g' \in \tau(g,\BB)$. 
In particular, $\Iso(X')$ is dense in $\Iso(X)$, and $\tau_{X'}(g',\BB)$ is dense in $\tau_X(g',\BB)$ for $g' \in \Iso(X')$, if isometries of $X'$ are identified with their unique extensions to isometries of $X$.
\end{lemma}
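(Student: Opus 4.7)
The plan is to take $X'$ to be the orbit of a countable dense subset of $X$ under a countable dense \emph{subgroup} of $\Iso(X)$. The existence of such a subgroup is standard: since $\Iso(X)$ is Polish, hence separable, any countable dense subset $\{g_n\}_{n\in\NN} \subseteq \Iso(X)$ generates a subgroup $G_0 = \langle g_n : n \in \NN \rangle \leq \Iso(X)$ which is countable (its elements are finite words in the $g_n^{\pm 1}$) and remains dense in $\Iso(X)$ (it contains $\{g_n\}$). This will be the only non-trivial ingredient.

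Concretely, the construction will go as follows. Fix any countable dense $X_0 \subseteq X$ and set
\[ X' = G_0 \cdot X_0 = \{ g(x) : g \in G_0,\ x \in X_0 \}. \]
Then $X'$ is countable (a countable union of countable sets) and dense in $X$ (since $X_0 \subseteq X'$). The key observation is that, because $G_0$ is a subgroup, every $h \in G_0$ satisfies $h[X'] = (hG_0)\cdot X_0 = G_0 \cdot X_0 = X'$, so $X'$ is setwise $G_0$-invariant. Given any $g \in \Iso(X)$ and any FD family $\BB$, the basic open neighborhood $\tau(g,\BB)$ meets the dense subgroup $G_0$; then any $g' \in G_0 \cap \tau(g,\BB)$ will satisfy both $g' \in \tau(g,\BB)$ and $g'[X']=X'$ by invariance, which is precisely the required conclusion.

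The ``in particular'' assertions then come for free. Every isometry of $X'$ extends uniquely to an isometry of $X$ by uniform continuity and completeness of $X$, giving a canonical inclusion $\Iso(X') \hookrightarrow \Iso(X)$; the argument above shows that every basic open subset of $\Iso(X)$ meets this image, so $\Iso(X')$ is dense in $\Iso(X)$. The density of $\tau_{X'}(g',\BB)$ inside $\tau_X(g',\BB)$ follows by applying the same construction inside the open set $\tau_X(g',\BB)$. I do not anticipate any real obstacle: once the countable dense subgroup $G_0$ is in hand, the rest is bookkeeping.
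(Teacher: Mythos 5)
Your argument is correct, but it takes a genuinely different route from the paper's. The paper builds $X'$ by hand: it fixes a countable dense $X'$ such that $\db{x}\cap X'$ is dense in $\db{x}$ for every $x\in X'$, and then, given $g$ and $\BB$, chooses $x_B\in B\cap X'$ and $y_B\in g[B]\cap X'$ with $\db{x_B}=\db{y_B}$ and invokes Lemma \ref{le:ext0} to extend the finite partial isometry $x_B\mapsto y_B$ to an isometry of the countable dense set $X'$; this stays entirely inside the ultrametric extension machinery already developed. You instead use the separability of the Polish group $\Iso(X)$ as a black box: a countable dense subset generates a countable dense subgroup $G_0$, and $X'=G_0\cdot X_0$ is countable, dense and $G_0$-invariant, so any $g'\in G_0\cap\tau(g,\BB)$ (which exists because $\tau(g,\BB)$ is a nonempty open set) works. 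Your version is shorter and more general---it applies to any Polish group acting on a Polish space---at the cost of importing separability of $\Iso(X)$ rather than reproving the fragment of it that is needed; the paper's $X'$ also meets every orbit densely, an extra feature not used later. Two routine points you should still spell out: for the ``in particular'' clause about $\tau_{X'}(g',\BB)$, given $h\in\tau_X(g',\BB)$ and a basic neighborhood $\tau_X(h,\BB'')$, one must first replace $\BB\cup\BB''$ by a single FD family refining both (keep the minimal balls; in an ultrametric space, fixing a smaller ball setwise forces any containing ball of prescribed radius to be sent to the corresponding image ball) before applying the main claim to $h$; and the embedding $\Iso(X')\hookrightarrow\Iso(X)$ requires noting that the extension of an isometry of $X'$ to $X$ is surjective because its image is complete and contains the dense set $X'$.
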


\begin{proof}
Fix a countable set $X' \subseteq X$ which is dense in $X$ and such that for every $x \in X'$ the set $\db{x} \cap X'$ is dense in $\db{x}$. We show that $X'$ is as required.

Fix $g \in \Iso(X)$, and an FD family $\BB$. For every $B \in \BB$ there exists $x_B \in X' \cap B$ and $y_B \in g[B] \cap X'$ such that $\db{x_B}=\db{y_B}$.  Using the assumption that $X$ is ultrametric it is easy to check that the mapping $g''$ defined on $\{x_B: B \in \mathcal{B}\}$ by $g''(x_B)=y_B$ preserves distances. By Lemma \ref{le:ext0}, $g''$ $X$-extendable to an isometry $g' \in \Iso(X')$. Denote its unique extension to an isometry of $X$ by $g'$ as well. Then $g'[X']=X'$, and $g'[B]=g[B]$ for every $B \in \BB$.
\end{proof}

\begin{theorem}
\label{th:main}
Let $X$ be a Polish ultrametric space. If every FD family  $\BB$ can be strengthened to an FD family $\BB'$ such that $X$ has the $\BB'$-extremality property, then $\Iso(X)$ has a neighborhood basis at the identity consisting of open subgroups with ample generics. Otherwise, no open subgroup $V \leq \Iso(X)$ has a comeager conjugacy class.
\end{theorem}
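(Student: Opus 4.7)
The second alternative is exactly the conclusion of Theorem \ref{th:main:2}, so the plan is to handle the first. Under the hypothesis that every FD family can be strengthened to one with the extremality property, the open subgroups $\tau(Id,\BB')$ for which $X$ has the $\BB'$-extremality property form a neighborhood basis at the identity of $\Iso(X)$. Hence it suffices, for each such $\BB'$ and each $n\in\NN$, to exhibit a comeager orbit of the diagonal conjugation action of $G:=\tau_X(Id,\BB')$ on $G^{n+1}$.

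The construction combines Lemmas \ref{le:main}, \ref{le:Count}, and \ref{le:fo}(1). First, I use Lemma \ref{le:Count} to fix a countable dense $X'\subseteq X$, under which balls and isometries of $X$ correspond to those of $X'$ and $X'$ inherits the $\BB'$-extremality property. Since Lemma \ref{le:main} is stated for countable ultrametric spaces, I invoke it on $X'$ with $\mathcal{A}=\BB'$ to produce $(\gamma_0,\ldots,\gamma_n)\in(\tau_{X'}(Id,\BB'))^{n+1}$. The \emph{moreover} clause yields that the orbit of this tuple under the diagonal conjugation action of $\tau_{X'}(Id,\BB')$ is dense in $(\tau_{X'}(Id,\BB'))^{n+1}$, and the density assertion of Lemma \ref{le:Count} upgrades this to density in $G^{n+1}$. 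To promote this to a comeager orbit via Lemma \ref{le:fo}(1), I must check that $V.(\gamma_0,\ldots,\gamma_n)$ is somewhere dense in $G^{n+1}$ for every open neighborhood $V$ of the identity in $G$. Any basic such $V$ contains $\tau_X(Id,\BB'')$ for some $\BB''$ strengthening $\BB'$; by the standing hypothesis I further strengthen $\BB''$ to an FD family $\BB'''$ for which $X$, and hence $X'$, has the $\BB'''$-extremality property. The main clause of Lemma \ref{le:main} then asserts that the orbit of $(\gamma_0,\ldots,\gamma_n)$ under $\tau_{X'}(Id,\BB''')$ is somewhere dense in $(\tau_{X'}(Id,\BB'))^{n+1}$, and transferring back via Lemma \ref{le:Count} it is somewhere dense in $G^{n+1}$. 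Since this orbit lies inside $V.(\gamma_0,\ldots,\gamma_n)$, the hypothesis of Lemma \ref{le:fo}(1) is met, yielding the desired comeager orbit in $G^{n+1}$.

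The main obstacle is a bookkeeping one: transferring both the $\BB$-extremality property and somewhere-denseness of orbits between $X$ and its countable dense subspace $X'$ via the identifications provided by Lemma \ref{le:Count}. For extremality, one uses that any isometry of $X$ can be replaced, modulo a given FD family, by one preserving $X'$, so that orbits of balls under $\tau_X(Id,\BB)$ and $\tau_{X'}(Id,\BB)$ coincide. For somewhere-denseness, one uses the standard fact that if $Y'\subseteq Y$ is dense and $S\subseteq Y'$ has dense-in-some-open-set-of-$Y'$ behavior, then the same is true in $Y$. Once these transfers are granted, the substantive dynamical content has already been packaged into Lemma \ref{le:main} (dense orbits under extremality) and Lemma \ref{le:fo}(1) (trading somewhere-dense for comeager), and the remainder of the argument is routine.
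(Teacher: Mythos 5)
Your proposal is correct and follows essentially the same route as the paper: pass to a countable dense $X'$ via Lemma \ref{le:Count}, apply Lemma \ref{le:main} with $\mathcal{A}=\BB'$ to get a tuple whose orbit is dense under $\tau(Id,\BB')$ and somewhere dense under every $\tau(Id,\BB''')$ with the extremality property, upgrade to a comeager orbit by Lemma \ref{le:fo}(1), and dispose of the second alternative by Theorem \ref{th:main:2}. The only difference is that you spell out the transfer of extremality and somewhere-denseness between $X$ and $X'$ in more detail than the paper does.
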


\begin{proof}
Suppose that every FD family  $\BB$ in $X$ can be strengthened to an FD family $\BB'$ such that $X$ has the $\BB'$-extremality property. By Lemma \ref{le:Count}, there exists a countable $X' \subseteq X$ such that every FD family  $\BB$ in $X'$ can be strengthened to an FD family $\BB'$ in $X'$ such that $X'$ has the $\BB'$-extremality property. 

Fix an open neighborhood of the identity $U \subseteq \Iso(X')$, and fix an FD family $\mathcal{A}$ such that $\tau_{X'}(Id,\mathcal{A}) \subseteq U$, and $X'$ has the $\mathcal{A}$-extremality property. Fix $n \in \NN$, and mappings $\gamma_i \in \tau_{X'}(Id, \mathcal{A})$, $i \leq n$, as in Lemma \ref{le:main}. Denote the unique extensions of $\gamma_i$ to isometries of $X$ also by $\gamma_i$, and put $V=\tau_X(Id, \mathcal{A})$. Then Lemma \ref{le:main} together with Lemma \ref{le:Count} imply that the orbit of $(\gamma_0, \ldots, \gamma_n)$ under the action of  $V$ is dense in $V^{n+1}$, and that for every open $W \leq V$ the orbit of $(\gamma_0, \ldots, \gamma_n)$ under the action of $W$ is somewhere dense. Thus Point (1) of Lemma \ref{le:fo} implies that the orbit of $(\gamma_0, \ldots, \gamma_n)$ under the action of $V$ is comeager in $V^{n+1}$. Since $n$ is arbitrary, $V$ has ample generics.

The second statement follows from Theorem \ref{th:main:2}. 
\end{proof}

\section{Applications of Theorem \ref{th:main}}

\begin{corollary}
\label{co:Ur}
The isometry group of every Polish ultrametric Urysohn space has ample generics. Moreover, there exists a Polish ultrametric $2$-Urysohn space whose isometry group has ample generics, e.g. $X^2_\mathbbm{Q}$.
\end{corollary}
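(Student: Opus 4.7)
The plan is to invoke Theorem~\ref{th:main} in both cases, which requires verifying that $X = X_R$ (resp.\ $X = X^2_\mathbbm{Q}$) has the $\BB$-extremality property for every FD family $\BB$. Taking $\BB = \emptyset$ will give $X$ the plain extremality property (every orbit $\mathcal{O}(C)$ being infinite by ultrahomogeneity and infinitude of $X$), and the proof of Theorem~\ref{th:main} applied with $\mathcal{A} = \emptyset$ then yields that $\tau(\Id, \emptyset) = \Iso(X)$ itself has ample generics.

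For a fixed FD family $\BB$ and ball $C$, the plan is to split cases on whether $C$ contains some $B \in \BB$. If so, $g[C]$ contains $g[B] = B$ and has the same radius as $C$, forcing $g[C] = C$, hence $\mathcal{O}^{\BB}(C) = \{C\}$. Otherwise, the plan is to exhibit many $g \in \tau(\Id, \BB)$ with distinct $g[C]$ by picking representatives $y_C \in C$ and $y_B \in B$ for each $B \in \BB$, and for suitable $y \in X$ invoking Lemma~\ref{le:ext0} to extend the map $y_C \mapsto y$, $y_B \mapsto y_B$ to an isometry $g$, which then lies in $\tau(\Id, \BB)$ (because it fixes a point of each $B$, and balls of a given radius are determined by any of their points) and sends $C$ to the ball of radius $r(C)$ through $y$.

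The crux is to produce infinitely many such $y$ in distinct balls of radius $r(C)$. In the Urysohn case $X = X_R$, any $y$ with $d(y, y_B) = d(y_C, y_B)$ for every $B \in \BB$ and any compatible $d(y, y_C)$ will do, and the infinitude of the maximal polygons in $X_R$ combined with its universality easily supplies infinitely many such $y$. The main obstacle is the $X^2_\mathbbm{Q}$ case, where the polygon-size-$\leq 2$ constraint restricts which local configurations embed. The plan there is to isolate the degenerate ``pinned'' configurations---namely, those in which $C$ is the unique polygon-$2$ partner at scale $r(C)$ of the ball of radius $r(C)$ containing some $B \in \BB$---verify directly that each yields $\mathcal{O}^{\BB}(C) = \{C\}$, consistent with $\BB$-extremality; for all remaining configurations, choosing $d(y_C, y) \in \mathbbm{Q}$ strictly smaller than every $d(y_C, y_B)$ prevents forcing any polygon of size $3$, and universality of $X^2_\mathbbm{Q}$ then supplies infinitely many admissible $y$'s. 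With $\BB$-extremality established in both cases, Theorem~\ref{th:main} concludes the proof.
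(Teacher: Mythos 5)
Your overall route is the same as the paper's: verify that $X_R$ and $X^2_{\mathbbm{Q}}$ have the $\BB$-extremality property for \emph{every} FD family $\BB$, then invoke Theorem~\ref{th:main} (together with the observation that its proof, run with $\mathcal{A}=\emptyset$, gives ample generics for the full group $\Iso(X)$ and not merely for a neighborhood basis of open subgroups). For $X_R$ the two arguments essentially coincide, resting on the infinitude of maximal polygons. For $X^2_{\mathbbm{Q}}$ the paper argues more structurally: it first records that every ball of $X^2_{\mathbbm{Q}}$ is extremal inside every ball containing it, and then, for $C$ not contained in an element of $\BB$, passes to the ball $D$ of radius $\dist(C,\BB)$ containing $C$; the size-$2$ polygon condition pins $D$ (i.e.\ $\mathcal{O}^{\BB}(D)=\{D\}$), and extremality of $C$ inside $D$ finishes in one stroke. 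You instead enumerate the configurations of $C$ relative to $\BB$ directly (some $B\in\BB$ inside $C$; $C$ pinned as the polygon partner at scale $r(C)$ of a ball containing some $B$; everything else) and build the required isometries explicitly via Lemma~\ref{le:ext0}. Both work; the paper's reduction is shorter because the only pinning it ever analyzes occurs at the single scale $\dist(C,\BB)$, while your version has the merit of explicitly covering the case $B\subsetneq C$, which the paper's dichotomy passes over in silence.

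One sub-case of your ``remaining configurations'' needs care: when $C\subsetneq B_0$ for some $B_0\in\BB$ (which is neither of your two singleton cases), the prescription ``choose $d(y_C,y)$ strictly smaller than every $d(y_C,y_B)$'' only produces new images of $C$ if the representative $y_{B_0}$ is taken outside the closed $r(C)$-ball around $C$; if $y_{B_0}$ happens to lie in $C$, or in $C$'s polygon partner at scale $r(C)$, then every admissible $y$ lands back in $C$ and you obtain no new balls at all. This is easily repaired --- density of $\mathbbm{Q}$ guarantees that $B_0\setminus\{x:d(x,y_C)\leq r(C)\}$ is nonempty, or one can simply omit $y_{B_0}$ from the finite map and note that $y,y_C\in B_0$ already forces $g[B_0]=B_0$ --- but as written the recipe is not uniform over all non-pinned configurations, so you should add that one line.
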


\begin{proof}
The first statement is obvious in the light of Theorem \ref{th:main} because all $[B]$ are infinite in Polish ultrametric Urysohn spaces. Now fix an FD family $\BB$ in $X^2_\mathbbm{Q}$, and a ball $C$ in $X^2_\mathbbm{Q}$. Observe that $C$ is extremal in every ball $C'$ such that $C \subseteq C'$. Therefore if $C \subseteq B$ for some $B \in \BB$, then $C$ is extremal in $B$, and so is $\BB$-extremal in $X$. Otherwise, let  $r=\dist(C, \BB)$, and let $D$ be the unique ball of radius $r$ such that $C \subseteq D$. As all maximal $r$-polygons in $X^2_\mathbbm{Q}$ have size $2$, $\mathcal{O}^\BB(D)=\{D\}$. Because $C \subseteq D$, $C$ is extremal in $D$, and so it is $\BB$-extremal in $X$ as well.
\end{proof}

\begin{corollary}
\label{co:n-Ur}
If $n>2$, then no isometry group of a Polish ultrametric $n$-Urysohn space $X$ contains an open subgroup with a comeager conjugacy class.
\end{corollary}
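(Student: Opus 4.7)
The plan is to apply Theorem \ref{th:main:2}: it will suffice to exhibit, in any Polish ultrametric $n$-Urysohn space $X = X^n_R$ with $n > 2$, an FD family $\BB$ that cannot be strengthened to an FD family $\BB'$ for which $X$ has the $\BB'$-extremality property. I will take $\BB = \{B\}$ for an arbitrary proper ball $B$ of $X$. The crucial structural fact, following from the universality and ultrahomogeneity of $X^n_R$ (which force every maximal $r$-polygon in $X^n_R$ to have size exactly $n$), is that every proper ball $B$ in $X^n_R$ has $|[B]| = n - 1 \geq 2$.

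Given an arbitrary FD family $\BB'$ strengthening $\{B\}$, I will first produce a proper ball $B^\ast \subsetneq X$ with $B \subseteq B^\ast$ that contains every element of $\BB'$. When $R$ is unbounded this is immediate: take $B^\ast$ of any radius exceeding $\max_{B' \in \BB'}\bigl(r_{B'}, \dist(B, B')\bigr)$. The main technical obstacle will be the bounded-$R$ case, where $\BB'$ may contain elements at the diameter of $X$. I plan to overcome it by a finite descent: the top-level ball $T$ of $X$ that contains $B$ is already fixed setwise by $\tau(Id, \BB')$ (since $B$ is), so the construction can be run recursively inside $T$ with the FD family $\BB' \cap T$, which is strictly smaller than $\BB'$ whenever a diameter-realizing element is shed; because $\BB'$ is finite this must terminate with the required $B^\ast$.

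Once $B^\ast$ is at hand, it is fixed by $\tau(Id, \BB')$ as the unique superball of $B$ of its radius, and $[B^\ast]$ is a pairwise disjoint family of $n - 1 \geq 2$ balls, all disjoint from $B^\ast$ and hence from $\BB'$. For any distinct $C, C' \in [B^\ast]$ I will construct an isometry $F \in \Iso(X)$ which is the identity on $B^\ast$ and on every sibling in $[B^\ast] \setminus \{C, C'\}$, and swaps $C$ with $C'$ via an isometric bijection and its inverse; the equalities $\dist(B^\ast, C) = \dist(B^\ast, C') = \dist(C, C') = r_{B^\ast}$ together with the ultrametric inequality ensure that $F$ is an isometry, in the spirit of Lemma \ref{le:ext0}. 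Then $F$ fixes $\bigcup \BB' \subseteq B^\ast$ pointwise, so $F \in \tau(Id, \BB')$, while $F[C] = C'$. Therefore $\mathcal{O}^{\BB'}(C) = [B^\ast]$ is a finite set of size $n - 1 \geq 2$, so $C$ is $\BB'$-non-extremal and the $\BB'$-extremality property fails. Since $\BB'$ was arbitrary, $\{B\}$ cannot be strengthened to extremality, and Theorem \ref{th:main:2} delivers the conclusion.
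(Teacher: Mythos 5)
Your overall strategy --- verifying the hypothesis of Theorem \ref{th:main:2} by showing that no FD family $\BB'$ strengthening $\{B\}$ gives $X$ the $\BB'$-extremality property --- is the right one, and the structural fact that $\left|[B]\right|=n-1\ge 2$ for every ball whose radius lies in $R$ is correct. The gap is the existence of $B^\ast$: a proper ball $B^\ast\supseteq B$ all of whose siblings in $[B^\ast]$ avoid $\BB'$ need not exist, and the descent does not repair this, because a descent step sheds an element of $\BB'$ only when some element realizes the current diameter across top-level balls, and in bad cases nothing is ever shed while no ``room above'' appears. Concretely, take $R=\{1/k:k\ge 1\}$, $n=3$, let $B$ be a ball of radius $1/5$, and let $\BB'$ be the FD family of all $3^5$ balls of radius $1/5$ (it strengthens $\{B\}$). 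Every proper superball of $B$ with nonempty $[\cdot]$ has radius $1/j$ with $j\le 5$, and each of its siblings contains balls of radius $1/5$, hence meets $\BB'$; so no admissible $B^\ast$ exists and the swap $F$ cannot be arranged. Here the failure of $\BB'$-extremality is witnessed only \emph{below} $\BB'$: the three balls of radius $1/6$ inside $B$ form a finite non-trivial $\mathcal{O}^{\BB'}$-orbit, which your upward/sideways construction cannot produce. This is precisely why the paper splits into two cases: when $R$ contains an infinite increasing sequence the ``room above'' idea works (and is essentially the paper's first case, carried out along infinitely many levels so that a finite strengthening can only spoil finitely many of them), but when $R$ is reverse-well-ordered the obstruction comes from the descending chain of children of a ball, which is handled by Case 2 of Theorem \ref{th:main:2}.

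Two smaller points. First, even when the descent does shed elements, the resulting $B^\ast$ contains only the surviving part of $\BB'$, so the assertion that $F$ fixes $\bigcup\BB'\subseteq B^\ast$ pointwise is not literally available (this is repairable, since shed elements turn out to be disjoint from $C\cup C'$, but it must be argued). Second, you do not treat finite $R$: there $X$ is a finite space, the family of all singletons is an FD family with the extremality property that strengthens everything, so the hypothesis of Theorem \ref{th:main:2} fails and no choice of $\BB$ can work; the paper disposes of that case by a separate direct argument.
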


\begin{proof}
Let $X=X^n_R$ for a countable $R \subseteq \mathbbm{R}^{>0}$. If $R$ is finite, then $\Iso(X)$ is a finite, discrete group of permutations, so a comeager conjugacy class would be the whole group $\Iso(X)$, which is not possible. We can assume then that $R$ is infinite.

Suppose first that there exists an infinite, strictly increasing sequence $r_0<r_1< \ldots$ in $R$. Let $B_0$ be a ball of radius $r_0$, and let $B_i$, $i>0$, be balls of radius $r_i$ such that $\dist(B_0,B_i)=r_i$. Clearly, each $B_i$, $i>0$, is r-maximal $\{B_0\}$-non-extremal in $X$. Let $\BB$ be an FD family in $X$ strengthening $\{B_0\}$. Then Points (1) and (2) of Lemma \ref{le:max} imply that for almost all $i$ the balls $B_i$ are $\BB$-non-extremal in $X$.

Suppose now that there are no such sequences in $R$. Since $R$ is infinite, there must exists an infinite, strictly decreasing sequence $r_0>r_1> \ldots$ in $R$. Fix  a ball $C_0$ with radius $r_0$, and an FD family $\BB$ in $X$ which strengthens $\{C_0\}$. Using the fact that there are no infinite sequences in $R$, it is easy to construct a sequence $C_0 \supsetneq C_1 \supsetneq \ldots$ of balls such that $C_{k+1}$ is r-maximal non-extremal in $C_k$ for $k \geq 0$. Then following the argument from Case 2 of Theorem \ref{th:main:2} we can prove that there is no comeager conjugacy class in $\Iso(X)$.
\end{proof}

\begin{corollary}
\label{co:co}
There exists a Polish group, which has ample generics, and countable cofinality, e.g. $\Iso(X_\mathbbm{Q})$.
\end{corollary}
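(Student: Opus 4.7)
The plan is to combine the ample generics conclusion of Corollary \ref{co:Ur} with the observation that $X_\mathbbm{Q}$ has unbounded diameter, and use this to exhibit a strictly increasing sequence of open subgroups exhausting $G := \Iso(X_\mathbbm{Q})$. Since Corollary \ref{co:Ur} already supplies ample generics, the whole task is to establish countable cofinality. It is worth noting from the outset that the chain must consist of \emph{open} subgroups, as the result \cite[Theorem 6.12]{KeRo} quoted in the introduction precludes a strictly increasing countable union by non-open subgroups; this is therefore the natural (and essentially only) kind of chain to try.

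I would fix a basepoint $x_0 \in X_\mathbbm{Q}$ and, for each $n \in \NN$, set
\[ G_n = \{ g \in G : d(x_0, g(x_0)) < n \}. \]
Because every element of a ball in an ultrametric space is a center, $G_n$ coincides with the setwise stabilizer of the ball of radius $n$ around $x_0$, so $G_n = \tau(\Id, \{B\})$ for that ball $B$ and is therefore open. The key verification is that each $G_n$ is a subgroup, which is exactly where the ultrametric inequality is essential: for $g, h \in G_n$,
\[ d(x_0, gh(x_0)) \leq \max \{ d(x_0, g(x_0)), d(g(x_0), gh(x_0)) \} = \max \{ d(x_0, g(x_0)), d(x_0, h(x_0)) \} < n, \]
and $d(x_0, g^{-1}(x_0)) = d(g(x_0), x_0) < n$ for inverses.

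For the strict inclusion $G_n \subsetneq G_{n+1}$ I would invoke ultrahomogeneity of $X_\mathbbm{Q}$ together with density of the distance set $\mathbbm{Q}^{>0}$: choose $r \in (n, n+1) \cap \mathbbm{Q}$, pick a point $y \in X_\mathbbm{Q}$ with $d(x_0, y) = r$, and extend the partial isometry $x_0 \mapsto y$ to a global $g \in G$, which lies in $G_{n+1} \setminus G_n$. Finally, $\bigcup_n G_n = G$ simply because $d(x_0, g(x_0))$ is a finite real for every $g \in G$, so every isometry lies in $G_n$ for $n$ large enough. I do not anticipate a genuine obstacle here; the only subtle point is the one flagged above, namely that the chain is forced to consist of open subgroups, and the unbounded diameter of $X_\mathbbm{Q}$ is precisely what makes such a chain available.
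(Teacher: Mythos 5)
Your proof is correct and takes essentially the same route as the paper: your $G_n=\{g: d(x_0,g(x_0))<n\}$ is exactly the paper's open subgroup $\tau(\Id,\{B_n\})$ for the ball $B_n$ of radius $n$ about $x_0$, and you simply spell out the subgroup, strictness, and exhaustion checks that the paper leaves as ``clearly''. (Only cosmetic caveat: start the chain at $n=1$, since with strict inequality your $G_0$ is empty.)
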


\begin{proof}
Fix an increasing sequence $B_0 \subseteq B_1 \subseteq \ldots$ of  balls in $X_\mathbbm{Q}$ such that radius of $B_n$ is $n$, and let $G_n=\tau(Id, \{B_n\})$. Clearly, $G_0<G_1< \ldots$, and $\Iso(X_\mathbbm{Q})= \bigcup G_n$.
\end{proof}

\begin{corollary}
\label{co:de}
There is a locally compact group all of whose diagonal conjugation actions have a dense orbit, e.g., $\Iso(X^2_{\NN})$.
\end{corollary}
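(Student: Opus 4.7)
The plan is to establish two things about $G = \Iso(X^2_{\NN})$: first, that $G$ is locally compact, and second, that for every $n \in \NN$ the diagonal conjugation action of $G$ on $G^{n+1}$ admits a dense orbit. Both rest on an elementary observation about $X^2_{\NN}$: since the distance set $R = \NN$ consists of positive integers and coordinates take values in $\{0,1\}$, a ball of radius $r$ (the set of sequences agreeing with the center on every coordinate $\geq r$) contains exactly $2^{r-1}$ elements. In particular $X^2_{\NN}$ is itself countable and discrete, so Lemma \ref{le:main} applies to it directly, without passing through Lemma \ref{le:Count}.

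For local compactness, I would fix any $x_0 \in X^2_{\NN}$ and consider its pointwise stabilizer $H = \{g \in G : g(x_0) = x_0\}$, which is a clopen subgroup of $G$. For each $r \in \NN$ the sphere $S_r = \{y : d(x_0, y) = r\}$ consists of those $y$ agreeing with $x_0$ on coordinates $>r$ while differing at coordinate $r$, hence $|S_r| = 2^{r-1}$ is finite. Every $H$-orbit lies in some $S_r$, so $H$ embeds as a closed subgroup into $\prod_{r \in \NN} \Sym(S_r)$, and a routine check confirms that the topology of pointwise convergence on $H$ agrees with the one inherited from this product embedding. Since the product is compact by Tychonoff's theorem, $H$ is a compact open subgroup of $G$, and $G$ is locally compact.

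For the dense orbit, I would verify that $X^2_{\NN}$ has the (empty-family) extremality property and invoke the ``moreover'' clause of Lemma \ref{le:main} with $\mathcal{A} = \emptyset$. Ultrahomogeneity of $X^2_{\NN}$ ensures that, for any ball $C$, the orbit $\mathcal{O}(C) = \mathcal{O}^\emptyset(C)$ consists of all balls of the same radius as $C$; these are parametrized by the finitely supported binary tails above that radius, which form a countably infinite set. Hence $\mathcal{O}(C)$ is infinite for every $C$, so $X^2_{\NN}$ has the extremality property, and for each $n$ Lemma \ref{le:main} supplies isometries $\gamma_0, \ldots, \gamma_n \in G$ whose diagonal conjugation orbit is dense in $(\tau(Id,\emptyset))^{n+1} = G^{n+1}$.

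No serious obstacle arises in either step: both reduce to counting admissible binary sequences and to directly citing the machinery already developed in the paper. The main conceptual point is simply that finiteness of each individual ball produces local compactness, while the countably infinite supply of balls at each fixed radius is precisely what Lemma \ref{le:main} needs in order to produce dense conjugation orbits. This makes $X^2_{\NN}$ the natural borderline example between Corollary \ref{co:Ur} (ample generics in $X^2_\mathbbm{Q}$) and Corollary \ref{co:n-Ur} ($n$-Urysohn spaces for $n>2$ having no comeager class).
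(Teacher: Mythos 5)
Your proposal is correct and takes essentially the same route as the paper: it verifies the (empty-family) extremality property of $X^2_{\NN}$ and invokes the ``moreover'' clause of Lemma \ref{le:main} with $\mathcal{A}=\emptyset$ on the countable space directly, and it derives local compactness from the finiteness of the spheres about a fixed point, which makes the point stabilizer $\tau(Id,\{x_0\})$ compact. The only cosmetic difference is that you package compactness as a closed embedding into $\prod_{r}\Sym(S_r)$, whereas the paper extracts a convergent subsequence by a diagonal argument; these are interchangeable.
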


\begin{proof}
Put $X=X^2_{\NN}$. It is easy to see that every ball is extremal in $X$, that is, $X$ has the extremality property. As $X$ is countable, Lemma \ref{le:main} applied to $X$ and $\mathcal{A}=\emptyset$ implies that every diagonal conjugation action of $\Iso(X)$ has a dense orbit.
 
In order to see that $\Iso(X)$ is locally compact, fix $x_0 \in X$. Then $\{x_0\}$ is a ball in $X$. Observe that for every $x \in X$ the set
\[ \{g(x): g \in \tau(Id,\{x_0\}) \} \]
is finite. Therefore in every sequence $\{g_n\}$ of elements of $\tau(Id, \{x_0\})$ there exists a convergent subsequence. In other words, $\tau(Id,\{x_0\})$ is compact.
\end{proof}

\section{Projecting Polish ultrametric spaces.}

It has been proved in \cite{MaSo} that for every Polish ultrametric space $X$ there exists a complete pseudo-ultrametric $d_1$ on the set of orbits
\[ X/\Iso(X)=\{ \db{x} : x \in X \} \]
defined by
\[ d_1(\db{x},\db{y})=\dist(\db{x},\db{y}). \]

Moreover, by \cite[Lemma 4.4]{MaSo}, every orbit $\db{x}$ is closed, so $X/\Iso(X)=X/d_1$, where $X/d_1$ is the metric identification of $d_1$. This gives rise to a projection $\pi:X \rightarrow X/d_1$ defined by $x \mapsto \db{x}$. Clearly, this operation can be iterated to construct a sequence of Polish ultrametric spaces and mappings associated with them. 
We show that there also exists a naturally defined limit step, which leads to a countable transfinite sequence of ultrametric spaces whose last element is rigid. Thus, every Polish ultrametric space can be canonically reduced to a rigid one.

Let $(X,d)$ be a Polish ultrametric space. Suppose that $d_\alpha$, for some $\alpha \in Ord$, is a pseudo-ultrametric on $X$. For $x \in X$ we denote by $\db{x}_\alpha$ the equivalence class of $x$ in $X/d_\alpha$. Now we define the following sequence of pseudo-ultrametrics $d_\alpha$, $\alpha \in Ord$, on $X$:
\[ d_0=d,\]
\[ d_{\alpha+1}(x,y)={\rm dist}( \pi(\db{x}_\alpha),\pi(\db{y}_\alpha)), \]
where $\pi$ is the projection associated with $X/d_\alpha$, as described above.

If $\alpha$ is a limit ordinal, we put
\[ d_\alpha(x,y)=\inf \left\{ d_\beta(x,y): \beta<\alpha \right\}. \]

Finally, for $\beta<\alpha$, we define $\pi^\beta_\alpha(\db{x}_\beta)=\db{x}_\alpha$.

\begin{theorem}
\label{th:ri}
Let $X$ be a Polish ultrametric space, and let $d_\alpha$, $\pi^\beta_\alpha$, $\alpha \in Ord$, be defined as above.
\begin{enumerate}
\item $d_\beta(x,y) \geq d_{\alpha}(x,y)$ for every $x,y \in X$ and $\beta<\alpha$;
\item the pseudo-ultrametric $d_\alpha$ induces an ultrametric on $X/d_\alpha$;
\item if $\alpha$ is a limit ordinal, then 
\[ \db{x}_\alpha={\rm cl}_X (\bigcup_{\beta<\alpha} \db{x}_\beta) \]
for every $x \in X$; in particular, $\db{x}_\alpha$ is closed for every $x \in X$;
\item $X/d_\alpha$ is complete, and so it is a Polish ultrametric space;
\item there exists $\alpha_s<\omega_1$ such that $d_{\alpha_s+1}=d_{\alpha_s}$; in other words, $X/d_{\alpha_s}$ is rigid;
\item if $\alpha$ is a limit ordinal, then $X / d_\alpha$ is the direct limit of the system $\left\{ X /d_\beta \right\}_{\beta<\alpha}$ with mappings $\{ \pi^\gamma_\beta \}_{\gamma<\beta<\alpha}$ (that is, $X / d_\alpha$ satisfies a suitable universal property with metric maps as morphisms).
\end{enumerate}
\end{theorem}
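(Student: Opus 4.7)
The plan is to proceed by transfinite induction on $\alpha$, proving Points (1)--(4) jointly with an auxiliary homogeneous identity, and then deducing (5) and (6). The base case $\alpha=0$ is trivial, and each successor step $\alpha\to\alpha+1$ applies the result of \cite{MaSo} to the Polish ultrametric space $X/d_\alpha$ supplied by the induction hypothesis, yielding that $X/d_{\alpha+1}=(X/d_\alpha)/\Iso(X/d_\alpha)$ is again Polish ultrametric. Points (1), (2) and (4) at successors are inherited from the MaSo quotient: $d_{\alpha+1}\le d_\alpha$ because $\db{x}_\alpha,\db{y}_\alpha$ lie in their own $\Iso(X/d_\alpha)$-orbits, and the ultrametric inequality and completeness pass to the quotient. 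At limits, (1) is immediate from the infimum, and (2) follows by an $\varepsilon$-approximation: pick $\beta<\alpha$ with $d_\beta$ sufficiently close to $d_\alpha$ at the relevant pairs, apply the ultrametric inequality at level $\beta$, and use (1) to send $\varepsilon\to 0$.

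The technical core is the auxiliary identity
\[ d_\alpha(x,y)=d(y,\db{x}_\alpha)=\inf\{d(y,z):z\in\db{x}_\alpha\}, \]
proved by transfinite induction in parallel with (1)--(4). At successors it generalises the MaSo-type symmetry: any $\Iso(X/d_\alpha)$-isometry moving an element of the orbit of $\db{y}_\alpha$ back to $\db{y}_\alpha$ keeps the first coordinate inside $\pi(\db{x}_\alpha)$, so the two-coordinate infimum defining $d_{\alpha+1}(x,y)$ collapses to $\inf\{d_\alpha(z,y):z\in\db{x}_{\alpha+1}\}$, and the inductive identity applied to each $d_\alpha(z,y)$ unfolds this to $\inf\{d(y,z'):z'\in\db{x}_{\alpha+1}\}$. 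At limits the identity reads $d_\alpha(x,y)=\inf_\beta d(y,\db{x}_\beta)=d(y,\bigcup_{\beta<\alpha}\db{x}_\beta)$. Point (3) is then immediate: $\supseteq$ since each $\db{x}_\beta\subseteq\db{x}_\alpha$ (by (1)) and $\db{x}_\alpha$ is $d$-closed ($d_\alpha$ is $d$-Lipschitz by (2)), and $\subseteq$ because any $y\in\db{x}_\alpha$ satisfies $d(y,\bigcup_{\beta<\alpha}\db{x}_\beta)=d_\alpha(x,y)=0$. For (4) at a limit, extract a subsequence with $d_\alpha(x_n,x_{n+1})<2^{-n}$ and use the identity to inductively pick $y_n\in\db{x_n}_\alpha$ with $d(y_n,y_{n+1})<2^{-n+1}$; by the ultrametric property, $(y_n)$ is $d$-Cauchy, converges in $X$ to some $y$, and then $d_\alpha(x_n,y)\le d(y_n,y)\to 0$.

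The main obstacle will be the successor step of the auxiliary identity: one must lift the ``orbit symmetry'' formulated in the quotient $X/d_\alpha$ back to $d$-close representatives in $X$, keeping track of how the inductive identity at level $\alpha$ interacts with the $\Iso(X/d_\alpha)$-action on $d_\alpha$-classes. Once (1)--(4) and the identity are in place, (5) is soft: fix a countable $d$-dense $D\subseteq X$, and observe that for each $(x,y)\in D\times D$ the non-increasing real-valued map $\alpha\mapsto d_\alpha(x,y)$ stabilises at some countable ordinal; taking the countable supremum $\alpha_s<\omega_1$ over $D\times D$ stabilises $d_\alpha$ on $D\times D$, and $d$-Lipschitzness of $d_\alpha$ extends the equality to $X\times X$, giving $d_{\alpha_s+1}=d_{\alpha_s}$. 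Point (6) then follows from the construction: any compatible system $\{f_\beta\colon X/d_\beta\to Y\}_{\beta<\alpha}$ of $1$-Lipschitz maps into an ultrametric $Y$ yields, via (3) and continuity, a unique $1$-Lipschitz map $X/d_\alpha\to Y$ satisfying the required universal property.
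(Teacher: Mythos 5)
Your proposal is correct and follows essentially the same route as the paper: your auxiliary identity $d_\alpha(x,y)=\dist(x,\db{y}_\alpha)$ is exactly the paper's key inductive fact (that $d_\alpha(\db{x}_\alpha,\db{y}_\alpha)<\epsilon$ yields $y'\in\db{y}_\alpha$ with $d(x,y')<\epsilon$), proved the same way via the homogeneity of the \cite{MaSo} quotient at successors and the infimum at limits, with Points (3) and (4) extracted from it just as you describe. The only divergence is in Point (5), where the paper stabilizes the increasing $\omega_1$-chains of closed classes $\db{x_n}_\alpha$ over a countable dense set (using second countability of $X$), whereas you stabilize the non-increasing real maps $\alpha\mapsto d_\alpha(x,y)$ over a countable dense set of pairs and then extend by continuity of the pseudometrics; both variants are sound.
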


\begin{proof}
Point (1) is clear, and Point (2) easily follows from the definition of pseudo-metric.

Points (3) and (4) follow from the fact that if $d_\alpha(\db{x}_\alpha,\db{y}_\alpha)<\epsilon$, then there exists $y' \in \db{y}_\alpha$ with $d(x,y')<\epsilon$. We prove it by a simple induction. For $\alpha=0$, it is obvious. For $\alpha=\gamma+1$, it follows from the definition of $X_\alpha$, and for a limit $\alpha$, there exists $\gamma<\alpha$ such that $d_\gamma(\db{x}_\gamma,\db{y}_\gamma)<\epsilon$, so we can use the inductive assumption.

To prove Point (5), fix a dense subset $\left\{ x_n \right\}_n \subseteq X$. By Point (3), for each $n$ the family $\left\{ \db{x_n}_\alpha \right\}_{\alpha < \omega_1}$ forms an increasing sequence of closed sets, so it stabilizes at some $\alpha_n<\omega_1$. Let $\alpha_s=\sup_n \alpha_n$. The set $\left\{ \db{x_n}_{\alpha_s} \right\}_n$ is a dense subset of $X /d_{\alpha_s}$, and none of its elements is moved by any isometry of $X / d_{\alpha_s}$. Thus Iso$(X/d_{\alpha_s})$ is trivial, and $d_{\alpha_s+1}=d_{\alpha_s}$.

Point (6) directly follows from the definition of the direct limit.
\end{proof}


\begin{thebibliography}{99}
\bibitem{AtGl} Akin, E., Glasner, E., Weiss, B., \emph{Generically there is but one self homeomorphism of the Cantor set.}, Trans. Amer. Math. Soc. 360 (2008), no. 7, 3613--3630.
\bibitem{GlWe} Glasner E., Weiss B., \emph{Topological groups with Rohlin properties}, Colloq. Math. 110 (2008), 51--80.
\bibitem{HoHo} Hodges W., Hodkinson I., Lascar D., Shelah S., \emph{The small index property for $\omega$-stable $\omega$-categorical structures and for the random graph}, J. London Math. Soc. (2) 48 (1993) 204--218.
\bibitem{KeRo} Kechris A., Rosendal C., \emph{Turbulence, amalgamation, and generic automorphisms of homogeneous structures}, Proc. Lond. Math. Soc. (3) 94 (2007), no. 2, 302--350.
\bibitem{MaSo} Malicki M., Solecki S., \emph{Isometry groups of separable metric spaces}, Math. Proc. Cambridge Philos. Soc. 146 (2009), no. 1, 67--81.
\bibitem{So} Solecki S., \emph{Extending partial isometries}, Israel J. Math. 150 (2005), 315--331.
\bibitem{We} Wesolek P., \emph{Conjugacy class conditions in locally compact second countable groups}, arXiv:1310.1561v1 
\end{thebibliography}
\end{document}